\theoremstyle{plain}
\newtheorem{thm}{Theorem}[section]
\newtheorem{lem}[thm]{Lemma}
\newtheorem{cor}[thm]{Corollary}
\newtheorem{prop}[thm]{Proposition}
\theoremstyle{definition}
\theoremstyle{remark}
\newtheorem{rem}[thm]{Remark}
\newcommand{\E}{\mathbb{E}}
\begin{document}

\title{How to compute characteristic functions of affine processes via calculus of their operator symbols }
\author{ J\"org Kampen $^{1}$}
\maketitle

\begin{abstract}
The characteristic functions  of multivariate Feller processes with generator of affine type, and with smooth symbol functions have an explicit representation in terms of powers series with rational number coefficients and with monoms consisting of the symbol functions and formal derivatives of the symbol functions. The power series representations are convergent globally in time and on bounded domains of arbitratry size. Generalized symbol functions can be derived leading to power series expansions which are convergent on unbounded domains. The rational number coefficients can be efficiently computed by an integer recursion. As a numerical consequence characteristic functions of multivariate affine processes may be computed directly from the symbol function avoiding the generalized Riccati equation (an observation first made in \cite{BKS} in a more general context).  
\end{abstract}

\footnotetext[1]{Weierstrass Institute for Applied Analysis and Stochastics,
Mohrenstr. 39, 10117 Berlin, Germany, support by DFG Research Center Matheon is acknowledged.
\texttt{kampen@wias-berlin.de}.}

\section{Introduction}
Bochner told us to investigate Markov processes by the symbols of their associated infinitesimal generators. Recall the simple connection. From the probabilistic point of view we may start with a regular Feller process $$\left( (X^x_t)_{t\geq 0}, P^x\right)_{x\in {\mathbb R}^d}$$ with values in ${\mathbb R}^d$. Then the function
\begin{equation}\label{symbol}
\sigma (x,\xi):=-\lim_{t\downarrow 0}\frac{E^x\left(e^{i(X^x_t-x)\cdot \xi}\right)-1 }{t}
\end{equation}
is called the symbol of the process. The function
\begin{equation}
c_t(x,\xi)=E^x\left(e^{i(X^x_t-x)\cdot \xi}\right)
\end{equation}
entering the definition in (\ref{symbol}) is called the characteristic function. Now, if $(T_t)_{t\geq 0}$ is called the semigroup associated with $\left( (X^x_t)_{t\geq 0}, P^x\right)_{x\in {\mathbb R}^n}$, then we have
\begin{equation}
T_tf(x)=E^x\left(f(X^x_t)\right)=(2\pi)^{-n/2}\int_{{\mathbb R}^n}e^{ix\xi}c_t(x,\xi) \hat{f}(\xi)d\xi, 
\end{equation}
where $\hat{.}$ denotes the Fourier transform. It follows that the generator takes the form
\begin{equation}
Au(x)=\lim_{t\downarrow \infty}\frac{T_tu(x)-u(x)}{t}=-(2\pi)^{-n/2}\int_{{\mathbb R}^n}e^{ix\xi}\sigma(x,\xi)\hat{f}(\xi)d\xi
\end{equation}
Hence the symbol completely characterizes the process. However, recovering the characteristic function of a process  from its symbol is far from trivial. 
Especially it is far from trivial to say under which condition the symbol of a given partial integro-differential operator corresponds to a Markov process. H\"ormanders standard class of symbols of pseudo differential operators, namely functions $(x,\xi)\rightarrow \sigma(x,\xi)$ which satisfy
\begin{equation}
|\partial^{\alpha}_x\partial^{\beta}_{\xi}\sigma(x,\xi)|\leq c_{\alpha\beta}\sqrt{1+|\xi|^2}^{m-\rho|\alpha|+\delta|\beta|},
\end{equation}
where $0\leq \delta\leq \rho\leq 1$ do not fit very well in order to analyze this question. Symbolic calculus was generalized in \cite{BF}, and further generalized by \cite{H}, where the function $\sqrt{1+|\xi|^2}$ is replaced by more generalized weight functions. In this spirit weight functions were constructed such that the corresponding symbol class corresponds to a class of Feller processes (for an overview of more recent results cf. \cite{J} and references therein). Well it is clear, that symbolic calculus cannot fully characterize the class of Feller processes at least in its usual form because even in the class of Levy processes you find symbols which are not differentiable but correspond to fractional Laplacians for example. (Well, some symbolic calculus for non-differentiable symbol functions has been developed, but seems not suitable in the present case).  However, the class of Feller processes with differentiable symbols is quite considerable and for some fields of applications, especially finance, it seems to be satisfying. Well within this class the next question to be considered is how we can construct a certain class of Feller processes from their symbols, and the relation outlined above leads us to aim at the characteristic function of the process.
For processes $X$ where for each $t>0$ $X_t$ has an infinitely divisible distribution (such processes correspond Levy processes in law), an explicit formula for the characteristic function is known in form of a Levy-Kintchine formula, i.e.
\begin{equation}
\begin{array}{ll}
   \mathbb{E}\Big[e^{i\left\langle u,X^x_t-x\right\rangle } \Big]:=
e^{-\frac{1}{2}\left\langle Au,u\right\rangle t+i\left\langle r, u\right\rangle t+t\int_{{\mathbb R}^d}\left(e^{i\left\langle u,x\right\rangle}-1-i\left\langle u,x\right\rangle 1_D(x)\nu(dx) \right)}, 
\end{array}
\end{equation}
where $A$ is a symmetric nonnegative $d\times d$ matrix, $r\in{\mathbb R}^d$, and $\nu$ is a measure on ${\mathbb R}^d$ satisfying
\begin{equation}
\nu(\left\lbrace 0\right\rbrace )=0,~\mbox{and}~
\int_{{\mathbb R}^d\setminus \lbrace 0\rbrace} 
\min\left\lbrace |y|^2,1\right\rbrace \nu(dy)<\infty.
\end{equation}
For affine processes explicit formulas for the characteristic function are known only in special cases (cf. the example in \cite{DPS} for the case of affine processes without jumps). In the general case the computation of the (conditional) Fourier transform of an affine process is shown to be reducible to solving systems of general Riccati differential equations (cf. \cite{DFS}). Analytical solutions of the generalized Riccati differential equation systems are not known, and even numerically they are often difficult to solve, especially due to quadratic terms involved. In \cite{BKS} it is shown that for a class of affine equations a basis of analytic vectors can be found. In particular, this leads to recursive representations of characteristic functions in terms of constituents of the generator avoiding the generalized Riccati equations. In a special univariate case an explicit formula is derived.

In this paper we concentrate on the characteristic function and show that an explicit series in terms of the symbol and formal derivatives of the symbol with rational number coefficients can be derived. Convergence of the representation is shown.

\section{Explicit formulas for characteristic functions of affine processes}

For affine process $X^{0,x}$ with $X_{0}^{0,x}=x$ we search for an explicit representation of 
$\hat{p}(t,x,u)=\E[e^{\mathfrak{i}uX_{t}^{0,x}}]$. Here we denote $e^{iux}:=e^{\sum_{j=1}^d u_jx_j}$ for simplity of notation. We assume that the function $\widehat{p}$ is a global solution of the Cauchy problem
\begin{equation}\label{FBK1}
 \left\lbrace \begin{array}{ll}
 \frac{\partial\hat{p}}{\partial t}(t,x,u)= A\hat{p}\, (t,x,u),\\
\\
\hat{p}(0,x,u)=\exp(iux),\quad t\ge0,\quad x\in \Omega\subset\mathbb{R}^{d},
\end{array}\right.
\end{equation}
where $\Omega$ is some domain and 
\begin{align*}
&A[f](x)\equiv \frac{1}{2}\sum_{ij}a_{ij}(x)\frac{\partial^2}{\partial x_i\partial x_j}f(x)+\sum_i b_i(x)\frac{\partial}{\partial x_i}f(x) \\
&
 +\int_{\mathbb{R}^{d}\setminus\lbrace 0\rbrace}\left[  f(x+z)-f(x)-\frac{\partial
f}{\partial x}(x)\cdot z 1_{D}(z)\right]  \nu(x,dz).
\end{align*}
Here we assume that the equation is semielliptic, i.e.
$$(a_{ij})\geq 0,$$
and we assume that the operator is closed on some appropriate function space of its domain- in the case of a bounded $\Omega$ this is not a problem, but in general additional conditions on the measure $\nu$ may be needed (cf. \cite{BKS} for a discussion of that point). 
Furthermore $1_D$ is a indicator function of a bounded set $D$, $a_{ij}(x)$, $b_i(x)$ are affine functions, and $\nu(x,dz)$ is a Borel measure which is affine in $x$, i.e. we have the representations
\begin{equation}\label{coeff}
\begin{array}{ll}
a_{ij}(x)=a_{ij0}+\sum_{l=1}^d a_{ijl}x_l\\
\\
b_i(x)=b_{i0}+\sum_{l=1}^d b_{il}x_l\\
\\
\nu(x,dz)=1_{\Omega_0}\nu_{0}(dz)+\sum_{l=1}^dx_l1_{\Omega_l}\nu_l(dz),
\end{array}
\end{equation}
where $\Omega_0,\Omega_l\subset {\mathbb R}^d,~1\leq l\leq d$.
We define the symbol function
\begin{align*}
&\sigma:\Omega\times i{\mathbb R}^d\rightarrow {\mathbb C},\\
\\
&\sigma(x,iu):=\exp{(-iux)}A[\exp{iux}]\equiv -\frac{1}{2}\sum_{jk}a_{jk}(x)u_ju_k+\sum_j b_j(x)i u_j +\\
&
\exp{(-iux)}\int_{\mathbb{R}^{d}\setminus \lbrace 0\rbrace}\left[  \exp(iu(x+z))-\exp(iux)-iu\exp(iux)\cdot z 1_D(z)\right]  \nu(x,dz)=\\
\\
& -\frac{1}{2}\sum_{jk}a_{jk}(x)u_ju_k+\sum_j b_j(x)i u_j+\int_{\mathbb{R}^{d}\setminus \lbrace 0\rbrace}\left[  \exp(iuz)-1-iu\cdot z 1_D(z)\right]  \nu(x,dz)
\end{align*}
Since $\sigma$ is affine with respect to $x$ it is useful to define
\begin{equation}
\begin{array}{ll}
\sigma_l(iu):=&\partial_{x_l}\sigma(x,iu)=-\frac{1}{2}\sum_{jk}a_{jkl}u_ju_k+\sum_j b_{jl}i u_j\\
\\
&+\sum_{l=1}^d\int_{\mathbb{R}^{d}\setminus \lbrace 0\rbrace}\left[  \exp(iuz)-1-iu\cdot z 1_D(z)\right]\nu_l(dz)
\end{array}
\end{equation}
In the case $d=1$ we define
\begin{equation}
\begin{array}{ll}
\sigma_1(iu):=&\frac{\partial}{\partial \xi}\sigma(iu).
\end{array}
\end{equation}
\begin{rem}
In general we denote multivariate derivatives of order $\alpha=(\alpha_1,\cdots ,\alpha_d)$ of the symbol function with respect to the symbol variable $\xi$ by $\partial^{\alpha}_{\xi}\sigma$. Note that for $|\alpha|\geq 3$ multivariate derivatives of the symbol function are bounded, i.e. $|\partial^{\alpha}_{\xi}\sigma|<\infty$ if and only if ${\big |}\int_{{\mathbb R}^d\setminus \lbrace 0\rbrace}z^{\alpha}\nu(x,dz){\big |}< \infty$. 
\end{rem}

As a main result of the present paper we derive power series formulas for the characteristic functions of affine processes with rational number coefficient and (multivariate) monomials of the form
$$
\Pi_{|\epsilon_j|\leq k}\Pi_{l=1}^d\left(\partial^{\epsilon_j}_{\xi}\sigma (x,iu)\right)^{\alpha^k_j}\left(\partial_{\xi}^{\epsilon_j} \sigma_{l}(iu)\right)^{\beta^k_j},
$$
where the index $j$ denotes an enumeration of all multiindices $\epsilon_j$ of dimension $d$ and of order $k$ where $k$ varies over all nonnegative integer, i.e. $\epsilon_j=(\epsilon_{j1},\cdots ,\epsilon_{jd})$ and $\sum_{l=1}^d\epsilon_{jl}=k$, and $\alpha^k_j$ and $\beta_j$ denote nonnegative integers enumerated in accordance with the multiindices $\epsilon_j$.  
Obviously, we have
\begin{prop}
The symbol function
\begin{equation}
\begin{array}{ll}
\sigma:\Omega\times i{\mathbb R}^d \rightarrow {\mathbb C},~\Omega\subseteq {\mathbb R}^d~\mbox{open},\\
\\
\sigma(x,iu)=-\frac{1}{2}\sum_{jk}a_{jk}(x)u_ju_k+\sum_j b_j(x)i u_j\\
\\
\hspace{1.8cm}+\int_{\mathbb{R}^{d}}\left[  \exp(iuz)-1-iu\cdot z 1_D(z)\right]  \nu(x,dz)
\end{array}
\end{equation}
is globally bounded in the spatial variables $x$ if and only if either $\Omega$ is bounded or $b_{jl},a_{jkl}=0$ for $1\leq l\leq d$ and the measure $\nu=1_{\Omega_0}\nu_{0}(dz)+\sum_{l=1}^dx_l1_{\Omega_l}\nu_l(dz)$ has bounded $\Omega_l,~1\leq l\leq d$.
\end{prop}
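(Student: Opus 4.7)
The strategy is to exploit the affine structure of the coefficients. Substituting the representations (\ref{coeff}) into the definition of $\sigma$ produces the decomposition
$$
\sigma(x,iu)=\sigma_0(iu)+\sum_{l=1}^{d}x_l\,\sigma_l(iu),
$$
where $\sigma_l$ is the partial derivative already displayed in the text and $\sigma_0$ is the analogous expression built from the zero-indexed data $a_{jk0},b_{j0},\mathbf{1}_{\Omega_0}\nu_0$. Hence for each fixed $u$ the symbol is an affine function of $x$ on $\Omega$, and the question of global boundedness in $x$ reduces to the elementary question of when an affine function is bounded on $\Omega$.

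For ($\Leftarrow$), if $\Omega$ is bounded then the continuous function $\sigma(\cdot,iu)$ is bounded on $\Omega$ automatically. Otherwise, under the stated hypotheses the slopes $\sigma_l$ vanish identically in $u$ for $1\le l\le d$: the polynomial part disappears because $a_{jkl}=b_{jl}=0$, and the hypothesis on the $\Omega_l$ is read as ensuring that the L\'evy integral contribution $\int[e^{iuz}-1-iu\cdot z\mathbf{1}_D(z)]\mathbf{1}_{\Omega_l}\nu_l(dz)$ drops out of $\sigma_l$ as well. Then $\sigma(x,iu)=\sigma_0(iu)$ is independent of $x$ and a fortiori bounded.

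For ($\Rightarrow$), assume $\Omega$ is unbounded and $\sigma(\cdot,iu)$ is bounded on $\Omega$ for every fixed $u$. Pick any $l$ along which $\Omega$ is unbounded and let $x_l\to\pm\infty$; the affine decomposition forces $\sigma_l(iu)=0$ for every $u\in\mathbb{R}^d$. The main step is then to separate the polynomial and integral parts of
$$
\sigma_l(iu)=-\tfrac12\sum_{jk}a_{jkl}u_ju_k+\sum_{j}b_{jl}iu_j+\int_{\mathbb{R}^d\setminus\{0\}}\bigl[e^{iuz}-1-iu\cdot z\,\mathbf{1}_D(z)\bigr]\nu_l(dz),
$$
and conclude that $a_{jkl}=0$, $b_{jl}=0$, and the L\'evy part $\mathbf{1}_{\Omega_l}\nu_l$ contributes nothing, each independently. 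I expect this separation to be the main technical obstacle; the cleanest route is to invoke uniqueness of the L\'evy--Khintchine decomposition for $\sigma_l$ (viewed as a L\'evy-type symbol), supplemented by matching Taylor coefficients at $u=0$ to pin down the polynomial part and comparing growth as $|u|\to\infty$ (polynomial growth of the Gaussian part versus boundedness of the integral under the assumed boundedness of $\Omega_l$) to isolate the jump contribution.
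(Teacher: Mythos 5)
The paper offers no proof of this proposition at all (it is introduced with ``Obviously, we have''), so there is nothing to compare against; your reduction of the claim to the boundedness of the affine function $x\mapsto\sigma_0(iu)+\sum_l x_l\sigma_l(iu)$ is surely the intended argument. But there is a genuine gap in how you handle the jump term in the ($\Leftarrow$) direction. You write that the hypothesis ``$\Omega_l$ bounded'' \emph{is read as ensuring} that $\int_{\mathbb{R}^d\setminus\{0\}}\bigl[e^{iuz}-1-iu\cdot z\,1_D(z)\bigr]1_{\Omega_l}\nu_l(dz)$ drops out of $\sigma_l$. That is not an argument, and under your reading ($1_{\Omega_l}$ truncating the measure in the $z$-variable) it is false: take $\Omega_l$ a bounded ball and $\nu_l$ any nonzero finite measure on it; the integral is not identically zero in $u$, so $\sigma_l\not\equiv 0$, the slope survives, and $\sigma(\cdot,iu)$ is unbounded on an unbounded $\Omega$ even though all the stated hypotheses hold. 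The only reading under which the proposition is true is that $1_{\Omega_l}$ truncates in $x$, so that the jump contribution is $x_l 1_{\Omega_l}(x)\int[\cdots]\nu_l(dz)$ and boundedness of $\Omega_l$ makes the prefactor $x_l1_{\Omega_l}(x)$ a bounded function of $x$ \emph{without} the integral vanishing. Under that reading, however, $\sigma$ is no longer globally affine in $x$, so your opening decomposition has to be modified; you need to commit to one interpretation and make the ($\Leftarrow$) step actually close.

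Two further points on the ($\Rightarrow$) direction. First, ``let $x_l\to\pm\infty$'' presupposes that $\Omega$ is unbounded in every coordinate direction; an unbounded $\Omega$ such as $\mathbb{R}\times(0,1)^{d-1}$ only forces $\sum_l e_l\sigma_l(iu)=0$ for limiting unit directions $e$ of $\Omega$, not $\sigma_l\equiv0$ for all $l$ (this is arguably a defect of the proposition as stated, but your proof should not silently assume it away). Second, your proposed separation of the triplet from $\sigma_l\equiv0$ is only a sketch, and the step ``matching Taylor coefficients at $u=0$ to pin down the polynomial part'' does not work as stated: the first-order Taylor coefficient of the integral term at $u=0$ is $i\int z\,1_{D^c}(z)\,\nu_l(dz)$, which mixes with $b_{jl}$. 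The clean order is: divide by $|u|^2$ and let $|u|\to\infty$ to isolate and kill the quadratic part $(a_{jkl})$; then take real parts to get $\int(\cos(u\cdot z)-1)\,\nu_l(dz)=0$ for all $u$, which by nonpositivity of the integrand and positivity of $\nu_l$ forces $\nu_l=0$ off $\{0\}$; only then conclude $b_{jl}=0$ from the imaginary part. As it stands the proposal identifies the right skeleton but leaves both the key equivalence for the jump term and the uniqueness step unproved.
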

First we shall derive formulas which are valid locally. Then in a second step we globalize the result to bounded spatial domain and global time (in the spirit of \cite{BKS}).
More precisely, in order to obtain a representation which is globally in time we make use the following transformation
\begin{equation}\label{trans}
\begin{array}{ll}
t:[0,1)\rightarrow [0,\infty),\\
\\
t(\tau):=\beta \ln\left(\tan(\frac{\pi}{4}+\frac{\pi}{4}\tau\right).
\end{array}
\end{equation}
Note that in the transformed time coordinates we get an equivalent Cauchy problem, namely 
\begin{equation}\label{FBK2}
 \left\lbrace \begin{array}{ll}
 \frac{\partial\hat{p}_*}{\partial \tau}(\tau,x,u)= \frac{\pi}{4}\frac{\beta}{sin\left(\frac{\pi}{2}+\frac{\pi}{2}\tau\right)}A\hat{p}_*\, (\tau,x,u),\\
\\
\hat{p}_*(0,x,u)=\exp(iux),\quad \tau\in [0,1)\quad x\in \Omega\subset\mathbb{R}^{d}.
\end{array}\right.
\end{equation}
\begin{rem}
In order to keep notation simple we shall abbreviate
\begin{equation}
\rho(\tau):=\frac{\pi}{4}\frac{\beta}{sin\left(\frac{\pi}{2}+\frac{\pi}{2}\tau\right)}
\end{equation}

\end{rem}

\begin{rem}
In \cite{BKS} The transformation $t(\tau)=-\beta \ln(1-\tau)$ is used for similar purposes. Our choice is guided by computational/numerical considerations which will be pointed out in a subsequent paper.
\end{rem}

 It makes sense to consider the univariate case first, because even that case is involved.
\subsection{The univariate case}

We introduce the following multiindex notation. For each positive integer $k$ we denote
\begin{equation}
\alpha^k=(\alpha^k_0,\cdots ,\alpha^k_{k-1}),~~\beta^k=(\beta^k_0,\cdots ,\beta^k_{k-1}).
\end{equation}
where the entries $\alpha^k_j,\beta^k_j$ are nonnegative integers.
For any positive integer $k$ the projection of a $k$-tuples $\alpha^k$ (resp. $\beta^k$ on a $l$-tuple for an nonnegative integer $l\leq k$ will be denoted by $\Pi^k_l$ with
\begin{equation}
\Pi^k_l \alpha^k=(\alpha^k_0,\cdots ,\alpha^k_{l-1}),~~(\mbox{resp.}~\Pi^k_l \beta^k=(\beta^k_0,\cdots ,\beta^k_{l-1})).
\end{equation}
Adding or subtracting integers to such multiindices is denoted as follows:
\begin{equation}
\alpha^k+i_j=(\alpha^k_0,\cdots ,\alpha^k_j+i_j,\cdots ,\alpha^k_{k-1}).
\end{equation}
Similar for $\beta^k$ and for subtraction, i.e. we have
\begin{equation}
\alpha^k-i_j=(\alpha^k_0,\cdots ,\alpha^k_j-i_j,\cdots ,\alpha^k_{k-1}).
\end{equation}
In our expansion only tuples with nonnegative entries will appear. However, we do not exclude negative integer entries from the beginning in order to keep the rules simple. Certain constants depending on the $k$-tuples $\alpha^k,\beta^k$ will just be defined zero if negative entries occur, and then they do not contribute to our expansion.
We have the following result
\begin{thm}
Locally, the following representation holds:
\begin{equation}\label{univarexp}
\begin{array}{ll}
\hat{p}(t,x,u)=\exp(iux)\times\\
\\
{\Big(}1+\sum_{k\geq 1,(\alpha^k,\beta^k)\in M^k}c_{(\alpha^k,\beta^k)}
\Pi_{j=0}^{k-1}\left( \partial^j_{\xi}\sigma(x,iu)\right)^{\alpha^k_j}\left(\partial^j_{\xi}\sigma_1(iu)\right)^{\beta^k_j}t^k{\Big)} 
\end{array}
\end{equation}
where
\begin{equation}
M_k=\left\lbrace (\alpha^k,\beta^k){\Bigg|}\sum_{j=0}^{k-1}\left(\alpha^k_j+\beta^k_j\right)=k~\&~\sum_{j=0}^{k-1}\beta^k_j\geq \sum_{j=1}^{k-1}\alpha^k_j \right\rbrace.
\end{equation}
Here, for $k\geq 2$ $c_{(\alpha_k,\beta_k)}=0$ if $(\alpha_k,\beta_k)\not\in M_k$, and
\begin{equation}\label{univaraff}
\begin{array}{ll}
c_{(1,0)}=1,~~c_{(0,1)}=0\\
\\
\mbox{ and for}~k\geq 2\\
\\
c_{(\alpha^{k+1},\beta^{k+1})}=\frac{1}{(k+1)!}~~\mbox{if}~~\alpha^{k+1}_0=k+1\\
\\
c_{\left( \alpha^{k+1},\beta^{k+1}\right) }=\frac{1}{(k+1)!}\times \\
\\
\sum_{j=0}^{k-1}\sum_{\sum_{i=0}^{k-1}\lambda^k_i=j}{\Pi_k \alpha^{k+1}-1_j+\lambda^k \choose \lambda_k} c_{\left( \Pi_k \alpha^{k+1}-1_j+\lambda^k,\Pi_k \beta^{k+1}-\lambda^k\right) }.
\end{array}
\end{equation}
Here, the natural restriction holds that $\lambda^k=(\lambda^k_0,\cdots ,\lambda^k_{k-1})$ are $k$-tuples such that
\begin{equation}
\Pi_k \alpha^{k+1}-1_j+\lambda^k \in M_k.
\end{equation}
(Alternatively we could define $c_{(\alpha^k,\beta^k)}$ with pairs of $k$-tuples $(\alpha^k,\beta^k)$ not in $M_k$ to be zero, of course). 
\end{thm}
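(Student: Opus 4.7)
The plan is to conjugate out the exponential by writing $\hat{p}(t,x,u)=\exp(iux)\,Q(t,x,u)$ with $Q(0,x,u)=1$, expanding $Q(t,x,u)=1+\sum_{k\geq 1}Q_k(x,u)\,t^k$, and matching powers of $t$ against~\eqref{FBK1}. The first ingredient is a pseudo-differential Leibniz identity
\[
A\bigl[e^{iux}Q\bigr]=e^{iux}\sum_{\beta\geq 0}\frac{1}{\beta!}\bigl(\partial_\xi^\beta\sigma\bigr)(x,iu)\,\partial_x^\beta Q,
\]
which for the drift and diffusion part is the classical Leibniz rule and for the jump part is obtained by Taylor-expanding $Q(x+z)$ inside the jump integral and exchanging summation with integration (justified locally by the moment assumptions on $\nu$ encoded in the smoothness of $\sigma$). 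Matching $t^k$-coefficients then yields the fundamental recursion
\[
(k+1)\,Q_{k+1}=\sum_{\beta\geq 0}\frac{1}{\beta!}\bigl(\partial_\xi^\beta\sigma\bigr)\,\partial_x^\beta Q_k,\qquad Q_0=1,
\]
which immediately gives $Q_1=\sigma$ and hence the initial values $c_{(1,0)}=1$, $c_{(0,1)}=0$.

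Next I would exploit the affine dependence of $\sigma$ on $x$. Writing $\sigma(x,iu)=\sigma(0,iu)+x\,\sigma_1(iu)$ yields $\partial_x\partial_\xi^j\sigma=\partial_\xi^j\sigma_1$, $\partial_x^2\partial_\xi^j\sigma=0$, and $\partial_x\partial_\xi^j\sigma_1=0$. Consequently, $\partial_x^\beta$ applied to a monomial $\prod_j(\partial_\xi^j\sigma)^{\alpha_j}(\partial_\xi^j\sigma_1)^{\beta_j}$ reduces, via the generalized Leibniz rule, to a sum over multiindices $\lambda$ with $|\lambda|=\beta$ and $\lambda_j\leq\alpha_j$ recording how many copies of $\partial_\xi^j\sigma$ in each slot get converted to $\partial_\xi^j\sigma_1$, carrying the explicit multinomial weight $\beta!\prod_j\binom{\alpha_j}{\lambda_j}$. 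This is the only combinatorial mechanism by which new factors of $\partial_\xi^j\sigma_1$ enter.

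Inserting the ansatz $Q_k=\sum c_{(\alpha^k,\beta^k)}\prod_j(\partial_\xi^j\sigma)^{\alpha^k_j}(\partial_\xi^j\sigma_1)^{\beta^k_j}$ into the fundamental recursion and collecting contributions to a prescribed target monomial $\prod_j(\partial_\xi^j\sigma)^{\alpha^{k+1}_j}(\partial_\xi^j\sigma_1)^{\beta^{k+1}_j}$ then produces the stated formula for $c_{(\alpha^{k+1},\beta^{k+1})}$: the index $j$ in~\eqref{univaraff} labels the slot in which the fresh factor $\partial_\xi^j\sigma$ produced by the symbolic calculus is absorbed (hence the shift $-1_j$), while the choice of $\lambda^k$ inverts the $\partial_\xi^j\sigma\to\partial_\xi^j\sigma_1$ conversions performed by $\partial_x^{|\lambda^k|}$ (hence $+\lambda^k$ in $\alpha^k$ and $-\lambda^k$ in $\beta^k$). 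The special base case $\alpha^{k+1}_0=k+1$, i.e.\ the pure $\sigma^{k+1}$ monomial, arises by iterating the $\beta=0$ branch alone, giving the closed form $c=1/(k+1)!$. A parallel induction verifies $(\alpha^k,\beta^k)\in M_k$: the weight equality $|\alpha^k|+|\beta^k|=k$ is immediate since each application of $A$ adds one factor and possibly converts others, while the inequality $\sum_j\beta^k_j\geq\sum_{j\geq 1}\alpha^k_j$ is preserved because every introduction of a factor $\partial_\xi^j\sigma$ with $j\geq 1$ is forced by the $\beta=j$ slot of the symbolic calculus, and that slot simultaneously acts by $\partial_x^j$ on the previous $Q_{k}$, generating at least $j$ new $\partial_\xi^\cdot\sigma_1$-factors.

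The main obstacle is the combinatorial bookkeeping: after summing over all pairs $(j,\lambda^k)$ producing the same target monomial, one must verify that the composite factor from the $\beta!$ in the Leibniz rule exactly cancels the $1/\beta!$ in the fundamental recursion, so that the prefactor $1/(k+1)$ combines with the inductive hypothesis $c_{(\alpha^k,\beta^k)}\in\tfrac{1}{k!}\mathbb{Z}$ to give precisely the $1/(k+1)!$-scaled multinomial expression in~\eqref{univaraff}. Local-in-$t$ convergence is then a formal consequence of a crude upper bound on the number of admissible multiindices at each order together with the local boundedness of the finitely many symbol derivatives entering up to any fixed order $k$, which yields a strictly positive radius of convergence in $t$.
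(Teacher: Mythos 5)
Your proposal follows essentially the same route as the paper: the paper's (single, multivariate) proof also plugs the ansatz $\hat{p}=e^{iux}\bigl(1+\sum_k d_k t^k\bigr)$ into the Cauchy problem and uses the symbolic Leibniz identity to obtain the fundamental recursion $d_{k+1}=\frac{1}{k+1}\sum_{|\epsilon|\le k}\partial^{\epsilon}_{\xi}\sigma\,\frac{1}{\epsilon!}\,\partial^{\epsilon}_x d_k$, from which the coefficient recursion is read off using the affine structure $\partial_x\partial_\xi^{j}\sigma=\partial_\xi^{j}\sigma_1$. Your write-up is in fact more complete than the paper's, since you make explicit the combinatorial step (the $\beta!\prod_j\binom{\alpha_j}{\lambda_j}$ weights cancelling the $1/\beta!$, and the preservation of membership in $M_k$) that the paper asserts without detail.
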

\begin{rem}
The recursion is very easy to compute. Let us compute the approximation for the first few terms in full expansion(up to order 3 of (\ref{univaraff})). Let $d_k$ denote the sum of terms of order $k$ in time.  We have
\begin{equation}
\begin{array}{ll}
d_0=&1\\
\\
d_1=&\sigma(x,iu)\\
\\
d_2=&\frac{1}{2}\sigma(x,iu)^2+\frac{1}{2}(\partial_{\xi}\sigma(x,iu))\sigma_{1}(iu)\\
\\
d_3=&\frac{1}{3!}\sigma (x,iu)^3+\frac{3}{3!}\sigma(x,iu)(\partial_{\xi} \sigma (x,iu))\sigma_1(iu)\\
\\
&\frac{1}{3!}(\partial_{\xi}\sigma (x,iu))(\partial_{\xi}(\sigma_1(iu))\sigma_1(iu)+\frac{1}{3!}\left( \partial^2_{\xi}\sigma(x,iu)\right) \sigma_{1}(iu)^2.
\end{array}
\end{equation}
Well, higher order terms are easy to compute as well. The formular is coded by an affine triangle of rational numbers with $k$th row all numbers $c_{(\alpha^k,\beta^k)}$ where $\alpha^k,\beta^k$ are the $k$-tuples. We call that row the univariate affine triangle of the caracteristic function. The first row (terms of order $1$) contains just one number 
\begin{equation}
c_{(1,0)}=1
\end{equation}
(note that the tuple $(0,1)\not\in M_1$).
The second row contains two numbers
\begin{equation}
c_{((2,0),(0,0))}=\frac{1}{2},~~c_{((0,1),(1,0))}=\frac{1}{2}
\end{equation}
The third row contains four numbers
\begin{equation}
\begin{array}{ll}
c_{((3,0,0),(0,0,0))}=\frac{1}{6}~~c_{((1,1,0),(1,0,0))}=\frac{1}{2}\\
\\
c_{((0,1,0),(1,1,0))}=\frac{1}{6}~~c_{((0,0,2),(2,0,0))}=\frac{1}{6},
\end{array}
\end{equation}
and so on. The growth of the number of numbers in the rows of the affine triangle of the characteristic function is exponentially. However, with a simple computer program computation of order $20$ (about a million numbers) is no problem leading to formulas of unprecedented precision.
 
\end{rem}

\begin{rem}
 Note that in the second sum of \ref{univaraff} the cardinality of the second sum is
 \begin{equation}
  {j+k-1 \choose k-1},
 \end{equation}
a information which is useful for consideration of convergence.
\end{rem}

\begin{rem}
The representation is different from the special case considered in \cite{BKS}. In that case there is only one affine function in front of the integral operator, and we used this special structure to shift the affine functions within the symbol expansion. This is not possible in the general univariate affine case considered here. 
\end{rem}
Next we 'globalize' the result (quite in the spirit of \cite{BKS} by considering transformed variable). Note that in the case of transformed variables $t(\tau)=-\beta\ln(\tan(\frac{\pi}{4}+\frac{\pi}{4}\tau))$ we have symbol functions which depend on time, i.e.
\begin{align*}
&\sigma_{\rho}:[0,T_{\tau})\times \Omega\times i{\mathbb R}^d\rightarrow {\mathbb C},\\
\\
&\sigma_{\rho}(\tau ,x,iu):=\rho(\tau)\sigma(x,iu)
\end{align*}

 However, we let the symbol variable $\xi$ vary on the whole domain $i{\mathbb R}^d$ for the class of bounded symbol functions with bounded derivatives. As we observed, this is the case for processes with bounded moments and constant diffusion and drift coefficients.  
\begin{thm}
Let $\Omega\subseteq {\mathbb R}$ be some bounded domain of arbitraty size and let $T\in (0,\infty)$ some horizon. First assume that the symbol function $\sigma$ is bounded on $\Omega\times i{\mathbb R}^d$. Then there is a a parameter value $\beta >0$ such that on $[0,T_{\tau}]\times \Omega \times {\mathbb R}^d$ the following representation holds:
\begin{equation}\label{genunivarexp}
\begin{array}{ll}
\hat{p}_{\rho}(\tau,x,u)=\exp(iux){\Big(}1+\sum_{k\geq 1}d_k^*(\tau,x,iu)\tau^k{\Big)},
\end{array}
\end{equation}
where
\begin{equation}\label{genunivarexp1}
\begin{array}{ll}
d_k^*(\tau,x,iu):=\\
\\
\sum_{j=1}^k
\sum_{(m,l)\in M^{\tau}_{\beta kj}}\Pi_{r=1}^k\left(\rho^{m_r}\right)^{(l_r)}d_{\sum_{r=1}^j m_r}(x,iu)\frac{\sum_{r=1}^j m_r}{\sum_{r=1}^j(m_r+l_r)!},
\end{array}
\end{equation}
\begin{equation}\label{genunivarexp2}
M^{\tau}_{\beta kj}:=\left\lbrace (m,l):=(m_1,l_1,\cdots ,m_j,l_j)\in {\mathbb N}_0^{2j}|\sum_{r=1}^j(m_r+l_r)=k\right\rbrace ,
\end{equation}
and for $l=\sum_{r=1}^j m_r$
\begin{equation}\label{genunivarexp3}
d_l(x,iu)=\sum_{(\alpha^k,\beta^k)\in M^k}c_{(\alpha^k,\beta^k)}
\Pi_{j=0}^{k-1}\left( \partial^j_{\xi}\sigma(x,iu)\right)^{\alpha^k_j}\left(\partial^j_{\xi}\sigma_1(iu)\right)^{\beta^k_j}
\end{equation}
with the same recursion for $c_{(\alpha^k,\beta^k)}$ as before. Furthermore $\left( \rho^m\right)^{(l)}$ denotes the $l$th derivative of the $m$th power of the function $\rho$. Explicit formulas for that are easily available by elementary calculation. 

 Note that this implies a representation of the characteristic function which is global in time by replacing $\tau$ by $\tau(t)$ in the representation above.
\end{thm}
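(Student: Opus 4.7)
The plan is to derive the global representation by substituting the local expansion (from the previous theorem) into the time change $t = t(\tau)$, and then reorganizing the resulting series as powers of $\tau$ with coefficients involving $\rho(\tau)$ and its derivatives. First I would observe that $\hat{p}_*(\tau, x, u) := \hat{p}(t(\tau), x, u)$ satisfies the transformed Cauchy problem~(\ref{FBK2}) by the chain rule, since $t'(\tau) = \rho(\tau)$. Hence, on an interval around $\tau = 0$ on which the local series converges, the previous theorem yields
\[
\hat{p}_*(\tau, x, u) = \exp(iux)\Big(1 + \sum_{n \geq 1} d_n(x, iu)\, t(\tau)^n\Big),
\]
with $d_n$ the polynomial expressions in $\sigma, \sigma_1$ and their $\xi$-derivatives from (\ref{univarexp}) and (\ref{genunivarexp3}).

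The core combinatorial step is to expand each $t(\tau)^n$ in powers of $\tau$ while keeping the $\tau$-dependence explicit only through $\rho$ and its derivatives. Since $t(\tau) = \int_0^\tau \rho(s)\,ds$ with $t(0) = 0$, repeated integration by parts (equivalently, Taylor expansion of $t$ at the point $\tau$ using the boundary value $t(0) = 0$) produces a representation of $t(\tau)^n$ as a sum indexed by tuples $(m, l) = (m_1, l_1, \ldots, m_j, l_j)$ with $\sum_r (m_r + l_r) = k$, where $k$ records the total power of $\tau$ and $j$ the number of $\rho$-factors; each summand carries a rational weight times a product $\prod_{r=1}^j (\rho^{m_r})^{(l_r)}(\tau)$. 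Interchanging this expansion with the outer sum over $n$ and collecting terms by power of $\tau$ yields (\ref{genunivarexp}) with $d_k^*(\tau,x,iu)$ exactly of the form (\ref{genunivarexp1}) on the index set $M^\tau_{\beta kj}$ of (\ref{genunivarexp2}); verifying that the multinomial and factorial factors combine into the precise rational coefficient $\sum m_r / \sum (m_r+l_r)!$ is the main combinatorial obstacle, and requires careful bookkeeping of signs and Bell-polynomial-like structures arising from successive integration by parts.

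For convergence on $[0, T_\tau] \times \Omega \times \mathbb{R}^d$, the hypothesis that $\sigma$ is bounded on $\Omega \times i\mathbb{R}^d$, together with the affine dependence of the coefficients on $x$, gives uniform bounds on $\partial_\xi^j \sigma(x, iu)$ and $\partial_\xi^j \sigma_1(iu)$ for $(x,u) \in \Omega \times \mathbb{R}^d$. Using the cardinality estimate $\binom{j+k-1}{k-1}$ for the summands of $M_k$ noted in the preceding remark, these bounds produce an exponential estimate $|d_n(x,iu)| \leq C^n/n!$ uniform in $(x,u)$, so the local series has a positive radius of convergence $r > 0$. Choose $\beta > 0$ small enough that $t(T_\tau) = \beta \ln\tan(\pi/4 + \pi T_\tau/4) < r$. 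Then the substituted series converges absolutely and uniformly on $[0,T_\tau] \times \Omega \times \mathbb{R}^d$; since $\rho$ and its derivatives are uniformly bounded on the compact interval $[0, T_\tau]$, the combinatorial reorganization above preserves absolute convergence, and the representation (\ref{genunivarexp}) therefore holds uniformly on $[0,T_\tau] \times \Omega \times \mathbb{R}^d$ as claimed.
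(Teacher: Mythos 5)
Your overall strategy --- substitute the time change $t=t(\tau)$ into the local expansion and re-expand in powers of $\tau$ --- is consistent with the paper's setup (the paper gives no explicit proof of this theorem; its only worked proof in Section 2 is the recursion for the local multivariate case, and its evident intention is to run the same power-series ansatz on the transformed problem (\ref{FBK2}) and then invoke the counting estimates of Section 3). However, your convergence step has a genuine gap that defeats the point of the theorem. You choose $\beta$ ``small enough that $t(T_\tau)<r$,'' where $r$ is the radius of convergence of the \emph{local} series in $t$. But $T_\tau$ is the transformed image of the prescribed horizon $T$, i.e.\ $t(T_\tau)=T$ is fixed by hypothesis and does not shrink with $\beta$; your condition therefore reduces to $T<r$, which cannot be arranged for an arbitrary horizon. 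If instead you fix $T_\tau$ and shrink $\beta$, you only cover real times up to $t(T_\tau)<r$, i.e.\ exactly the interval on which the untransformed local series already converges --- so nothing has been globalized, contradicting the theorem's closing claim that the representation is global in time. The mechanism the transformation is supposed to supply (as in the cited holomorphic-transform approach) is different: for fixed $T_\tau<1$ the function $\rho(\tau)=\tfrac{\pi\beta}{4}/\cos(\tfrac{\pi}{2}\tau)$ is holomorphic on a complex disk of radius exceeding $T_\tau$, and choosing $\beta$ small scales down $\sup|t(\tau)|$ (equivalently the transformed generator $\rho(\tau)A$) on that disk so that $\tau\mapsto\hat p_*(\tau)$ is holomorphic there with controlled growth; convergence on $[0,T_\tau]$ then comes from Cauchy estimates in $\tau$, not from staying inside the $t$-radius $r$. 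Your argument never uses $\beta$ in this way.

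Secondly, the combinatorial heart of the statement --- that re-expanding $t(\tau)^n$ produces exactly the coefficients $\prod_r(\rho^{m_r})^{(l_r)}$ over $M^\tau_{\beta kj}$ with the rational weight $\sum_r m_r/\sum_r(m_r+l_r)!$ --- is deferred rather than proved; you explicitly label it ``the main combinatorial obstacle'' and leave it unresolved. Since this identity \emph{is} the content of (\ref{genunivarexp1}), the proposal does not establish the stated formula. Relatedly, your claimed bound $|d_n|\le C^n/n!$ does not follow from the counting lemma you cite (roughly $n!$ monomials each carrying a factor $1/n!$ times a product of $n$ bounded symbol factors gives $|d_n|\le C^n$, which is also what the paper's own convergence lemma asserts), and the rearrangement of the double series near $\tau=T_\tau$ requires quantitative control of $(\rho^{m})^{(l)}$ in $l$ (Cauchy estimates on a disk avoiding the pole at $\tau=1$), which you assert only qualitatively.
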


In the general case we have
\begin{cor}
Let $\Omega,~ S\subset {\mathbb R}$ be bounded domains of arbitraty size and let $T\in (0,\infty)$ some horizon.  Then there is a a parameter value $\beta >0$ such that on $[0,T_{\tau}]\times \Omega \times S$ the representation described in in (\ref{genunivarexp}), (\ref{genunivarexp1}), (\ref{genunivarexp2}), (\ref{genunivarexp3}) holds.
\end{cor}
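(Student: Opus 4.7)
The plan is to reduce the corollary to the preceding theorem by exploiting compactness rather than globally bounded symbols. The key observation is that under the bounded-domain hypothesis on both $\Omega$ and $S$, every quantity entering the universal representation (\ref{genunivarexp})--(\ref{genunivarexp3}) is automatically uniformly bounded, so the convergence argument used for the previous theorem applies verbatim after a suitable choice of $\beta$.

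First, I would verify that for each multiindex $\epsilon$ the formal derivative $\partial^{\epsilon}_{\xi}\sigma(x,iu)$ is a continuous function of $(x,u)\in\Omega\times S$. The polynomial part is trivially smooth; for the jump part one invokes the remark following the definition of $\sigma_l$: the standing moment assumption on $\nu(x,dz)$ (needed for the affine generator to be well defined) guarantees that the integrals defining $\partial^{\epsilon}_{\xi}\sigma$ and $\partial^{\epsilon}_{\xi}\sigma_l$ converge absolutely and depend continuously on $(x,iu)$. Since $\Omega$ and $S$ are bounded, their closures are compact, and consequently there exists a finite constant $M=M(\Omega,S)$ such that
\begin{equation}
\sup_{(x,u)\in\Omega\times S}\,\bigl|\partial^{j}_{\xi}\sigma(x,iu)\bigr|\,+\,\sup_{(x,u)\in\Omega\times S}\,\bigl|\partial^{j}_{\xi}\sigma_{1}(iu)\bigr|\;\le\; M^{j+1}\qquad\text{for all }j\ge 0.
\end{equation}
This step replaces the role played in the previous theorem by the global boundedness assumption on $\sigma$.

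Next, with this uniform bound in hand, the local representation (\ref{univarexp}) for $\hat{p}(t,x,u)$ holds pointwise on $\Omega\times S$ for $t$ in a neighborhood of zero. The time transformation (\ref{trans}) is then applied exactly as in the proof of the previous theorem: plugging $t=t(\tau)$ into the local series and collecting powers of $\tau$ produces the form (\ref{genunivarexp})--(\ref{genunivarexp3}), where the coefficients $\bigl(\rho^{m}\bigr)^{(l)}$ are smooth in $\tau\in[0,1)$ and uniformly bounded on $[0,T_{\tau}]$ for any $T_{\tau}<1$.

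Finally, one checks convergence on $[0,T_{\tau}]\times\Omega\times S$. Using the uniform bound $M$ together with the cardinality estimate in the remark and the combinatorial bound on $|M^{\tau}_{\beta k j}|$, the $k$-th term in (\ref{genunivarexp1}) is majorized by an expression of the form $C^{k}M^{k}\beta^{k}$ times a polynomial in $k$. Choosing $\beta>0$ sufficiently small (depending on $M$ and on $T_{\tau}$) forces the resulting majorant to be summable, yielding absolute and uniform convergence on $[0,T_{\tau}]\times\Omega\times S$. The hardest point is the verification that the choice of $\beta$ can be made uniform in $(x,u)\in\Omega\times S$; this is where compactness of $\overline{\Omega}\times\overline{S}$ and the uniform bound $M$ are essential, and it is precisely the step that fails when $S$ is allowed to be all of ${\mathbb R}$ without boundedness of $\sigma$.
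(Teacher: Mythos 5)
Your overall route is the one the paper intends: the corollary is stated without its own proof, as an immediate consequence of the preceding theorem, because once $S$ is bounded the symbol $u\mapsto\sigma(x,iu)$ (a polynomial in $u$ plus the jump integral) is bounded on $\Omega\times iS$, so the bounded-symbol hypothesis of Theorem 2.9 is restored on the restricted domain and the transformed series (\ref{genunivarexp})--(\ref{genunivarexp3}) carries over verbatim. Your write-up reconstructs exactly this reduction and then re-runs the $\beta$-tuning and majorization, which is consistent with the paper's Section 3 convergence argument.

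One step is over-claimed, however. You assert that compactness of $\overline{\Omega}\times\overline{S}$ yields a single constant $M$ with $\sup|\partial^{j}_{\xi}\sigma|+\sup|\partial^{j}_{\xi}\sigma_{1}|\le M^{j+1}$ \emph{for all} $j\ge 0$. Continuity plus compactness only gives, for each fixed $j$, some finite bound $M_{j}$; it says nothing about how $M_{j}$ grows in $j$. Since $\partial^{j}_{\xi}\sigma$ involves the $j$-th moment $\int z^{j}\,\nu(x,dz)$, a geometric bound $M_{j}\le M^{j+1}$ is equivalent to geometric growth of the moments of $\nu$, i.e.\ to the analyticity of the symbol in $\xi$ --- which is precisely the standing assumption the paper invokes in the proof of its multivariate local theorem (``polynomial growth of the derivatives\ldots means that the symbol function is analytic''). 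Without some such control the majorant $C^{k}M^{k}\beta^{k}$ does not follow, because the $k$-th block $d_{k}$ contains derivatives of $\sigma$ up to order $k-1$ and their size could outrun the $1/k!$ and the choice of $\beta$. The fix is simply to cite the smoothness/analyticity hypothesis on $\sigma$ (equivalently, the moment condition on $\nu$) rather than attributing the uniform-in-$j$ bound to compactness; also note that your constant $M$ must additionally majorize the finitely many moments of $\nu$ entering $\sigma_{1}$, not only the polynomial part.
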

The latter corollary shows that univariate affine processes with smooth symbols can be approximated on finite domains with accuracy of any practical interest by the power series described in theorem 2.9.. However, in Section 4 we shall define generalized symbol functions and describe generalizations of theorem 2.9 where we describe power series in terms of generlized symbol functions and their dervatives which converge on the whole domain (i.e. $\Omega \subseteq {\mathbb R}^n$ may be unbounded or equal to ${\mathbb R}^d$ and $S=i{\mathbb R}^d$ in corollary 2.10).  

\subsection{The multivariate case}
The difference between the univariate and the multivariate case is not that big as one may think. Well, we shall minimize this difference using a certain trick (which is also useful when programming the formulas of this article). For each positive integer $k$ we enumerate the $d$-dimensional multiindices in a list
\begin{equation}
\left\lbrace \epsilon_j=(\epsilon_{j1},\cdots ,\epsilon_{jd})|\sum_{r=1}^d \epsilon_{jr}=k~\&~j\in\left\lbrace 0,\cdots N_k-1\right\rbrace \right\rbrace 
\end{equation}
where $N_k$ is a natural number which euals the cardinality of the $d$-tuples of order $k$. In addition we need some '$k$-tuples of $d$-tuples' 
 For each positive integer $k$ we denote
\begin{equation}
\lambda_d^k=(\lambda^k_{d0},\cdots ,\lambda^k_{dk-1}),~~\beta^k_d=(\beta^k_{d0},\cdots ,\beta^k_{d(k-1)}).
\end{equation}
where the entries $\lambda^k_{dj},\beta^k_{dj}$ are $d$-tuples, i.e.
\begin{equation}
\lambda^k_{dj}=\left(\lambda^k_{dj1},\cdots, \lambda^k_{djd} \right),~\beta^k_{dj}=\left(\beta^k_{dj1},\cdots, \beta^k_{djd} \right),
\end{equation}
and with nonnegative numbers $\lambda^k_{djl},\beta^k_{dj1}~1\leq l\leq d$. 
Adding or subtracting integers to such multiindices is defined as before. Finally we have to adjust the tuples and the projection operators for tuples a bit. Instead of $\alpha^k$ in the univariate case we now have tuple
$$
\alpha^{N_k}=(\alpha_0,\cdots,\alpha_{N_k-1}).
$$
For all $k\in {\mathbb N}$ we define the projection operator $\Pi_{N_k}$ on the $N_{k+1}$ tuples of nonnegative integers which is determined by the relation 
$$
\Pi_{N_k}(\alpha_0,\cdots ,\alpha_{N_{k+1}})=(\alpha_0,\cdots ,\alpha_{N_{k}}).
$$
Furthermore for each $k\in {\mathbb N}$ we define
$$
\Pi^d_k \beta_d^{k+1}=\Pi^d_k(\beta^k_{d0},\cdots ,\beta^k_{dk})=(\beta^k_{d0},\cdots ,\beta^k_{d(k-1)})
$$
We have the following result
\begin{thm}
Locally, the following representation holds:
\begin{equation}
\begin{array}{ll}
\hat{p}(t,x,u)=&\exp(iux){\Big(}1+\sum_{k\geq 1,(\alpha^{N_k},\beta_d^k)\in M^k_d}c_{(\alpha^{N_k},\beta_d^k)}\times\\
\\
&\Pi_{j=0}^{N_k-1}\Pi_{l=1}^d\left(\partial_{\xi}^{\epsilon_j} \sigma(x,iu)\right)^{\alpha^{N_k}_j}
\left(\partial_{\xi}^{\epsilon_j} \sigma_l(iu)\right)^{\beta^k_{djl}}t^k{\Big)},
\end{array}
\end{equation}
where
\begin{equation}
M^k_d=\left\lbrace (\alpha^k,\beta^k_d){\Bigg|}\sum_{j=0}^{N_k-1}\sum_{l=1}^d\left(\alpha^k_j+\beta^k_{djl}\right)=k~\&~\sum_{j=0}^{N_k-1}\sum_{l=1}^d\beta^k_{djl}\geq \sum_{j=1}^{k-1}\alpha^k_j \right\rbrace.
\end{equation}
Here, for $k\geq 2$ $c_{(\alpha_k,\beta_k)}=0$ if $(\alpha_k,\beta_k)\not\in M^k_d$, and
\begin{equation}\label{multivaraff}
\begin{array}{ll}
c_{(1,(0,\cdots ,0))}=1 (\mbox{all other tuples of order $1$ equal zero}\\
\\
\mbox{ and for}~k\geq 2\\
\\
c_{(\alpha^{N_{k+1}},\beta_d^{k+1})}=\frac{1}{(k+1)!}~~\mbox{if}~~\alpha^{N_{k+1}}_0=k+1\\
\\
c_{\left( \alpha^{N_{k+1}},\beta_d^{k+1}\right) }=\frac{1}{(k+1)!}\times\\
\\
\sum_{j=0}^{N_k-1}\sum_{\sum_{r=0}^{N_k-1}\lambda^k_{dlr}=\epsilon^k_{jl},l=1,\cdots d}\Pi_{l=1}^d{\Pi_{N_k} \alpha^{N_{k+1}}-1_j+\lambda^k_{dl} \choose \lambda^k_{dl}}\times \\
\\
\hspace{6cm} c_{\left( \Pi_{N_k} \alpha^{N_{k+1}}-1_j+\lambda^k_{dl},\Pi^d_k \beta_d^{k+1}-\lambda^k_{dl}\right) }.
\end{array}
\end{equation}

\end{thm}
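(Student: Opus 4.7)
The approach parallels the proof of the univariate Theorem 2.8, with additional bookkeeping for the multi-index structure in dimension $d$. The plan is first to write $\hat p(t,x,u)=e^{iux}q(t,x,iu)$ with the power-series ansatz $q=1+\sum_{k\geq 1}d_k(x,iu)\,t^k$ and to establish the symbol-shift identity
\begin{equation*}
e^{-iux}A[e^{iux}q](x) \;=\; \sum_{\alpha\in{\mathbb N}_0^d}\frac{1}{\alpha!}\,\partial^\alpha_\xi\sigma(x,iu)\,\partial^\alpha_x q(x).
\end{equation*}
For the differential part this is a direct Leibniz computation on $\partial_i\partial_j(e^{iux}q)$; for the jump part one Taylor-expands $q(x+z)=\sum_\alpha \frac{z^\alpha}{\alpha!}\partial^\alpha_x q(x)$ inside the integral and identifies each resulting coefficient $\int z^\alpha e^{iuz}\nu(x,dz)$ (appropriately compensated for $|\alpha|\leq 1$) with $\partial^\alpha_\xi$ of the jump part of $\sigma$ evaluated at $iu$. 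Substituting into the Cauchy problem (\ref{FBK1}) and matching powers of $t$ reduces it to the $d_k$-recursion
\begin{equation*}
(k+1)\,d_{k+1}(x,iu) \;=\; \sum_{\alpha\in{\mathbb N}_0^d}\frac{1}{\alpha!}\,\partial^\alpha_\xi\sigma(x,iu)\,\partial^\alpha_x d_k(x,iu),\qquad d_0\equiv 1.
\end{equation*}

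Second, I insert the monomial ansatz for $d_k$ into this recursion and read off (\ref{multivaraff}). The affinity of $\sigma$ in $x$ yields the two basic rules $\partial_{x_l}[\partial^{\epsilon_j}_\xi\sigma(x,iu)]=\partial^{\epsilon_j}_\xi\sigma_l(iu)$ and $\partial_{x_l}[\partial^{\epsilon_j}_\xi\sigma_l(iu)]=0$, so each $\partial^\alpha_x$ acts only on the $\partial^{\epsilon_j}_\xi\sigma$-factors and, by Leibniz, expands into a finite sum over the possible distributions of the $x_l$-derivatives across those factors. The coefficient of each resulting monomial is precisely the product of multinomial factors $\prod_{l=1}^d\binom{\Pi_{N_k}\alpha^{N_{k+1}}-1_j+\lambda^k_{dl}}{\lambda^k_{dl}}$ appearing in (\ref{multivaraff}), while the outer prefactor $\partial^{\epsilon_j}_\xi\sigma$ supplied by the symbol shift accounts for the $-1_j$ shift on the $\alpha$-index. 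Dividing by $k+1$ reproduces (\ref{multivaraff}), and the same induction verifies that the admissibility set $M^k_d$ is stable under $k\to k+1$: the grade identity holds by conservation of the total order, and the inequality $\sum_{j,l}\beta^k_{djl}\geq\sum_{j\geq 1}\alpha^k_j$ persists because every $\partial^{\epsilon_j}_\xi\sigma$ with $|\epsilon_j|\geq 1$ is born from an $x$-derivative that simultaneously creates at least one companion $\sigma_l$-factor.

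Local convergence of the resulting series on a neighbourhood of $0$ in $t$ follows as in the univariate case: the cardinality of admissible tuples of grade $k$ has factorial-type growth, the recursion carries a uniform $(k+1)!^{-1}$ damping, and the monomials in the $\xi$-derivatives of $\sigma$ and of the $\sigma_l$ are uniformly bounded on compacta of $\Omega\times i{\mathbb R}^d$ under the smoothness hypothesis on the symbol, so the series converges absolutely for $t$ small enough. The main obstacle is the combinatorial bookkeeping in the second step: one has to align the Leibniz expansion of $\partial^\alpha_x$ applied to a product enumerated by the list $\{\epsilon_j\}_{j=0}^{N_k-1}$ of $d$-dimensional multi-indices of order $k$ with the nested multinomial sum $\sum_{\sum_r\lambda^k_{dlr}=\epsilon^k_{jl}}$ in (\ref{multivaraff}), and to record that, unlike the univariate case where a single tuple $\lambda^k$ suffices, in the multivariate case one has $d$ independent tuples $\lambda^k_{dl}$, one per coordinate direction in which the spatial derivatives may act.
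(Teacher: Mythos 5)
Your proposal follows essentially the same route as the paper: the core of the paper's proof is exactly the symbol--shift identity and the resulting recursion $d_{k+1}=\frac{1}{k+1}\sum_{|\epsilon|\leq k}\partial^{\epsilon}_{\xi}\sigma(x,iu)\frac{1}{\epsilon!}\partial^{\epsilon}_{x}d_k$, which the paper obtains by expanding the analytic symbol in powers of $iu$ and applying the multivariate Leibniz rule, while you obtain the same identity by Taylor-expanding $q(x+z)$ inside the jump integral. Your write-up is in fact somewhat more complete than the paper's, which stops at the $d_k$-recursion and leaves both the combinatorial identification with (\ref{multivaraff}) and the local convergence implicit (the latter is deferred to the counting arguments of Section 3).
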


\begin{proof}
Polynomial growth of the derivatives witb respect to the symbol variable means that the symbol function is analytic (this can be checked easily by estmating the remainder term of the multivariate Taylor formula). Hence we have
\begin{equation}
\begin{array}{ll}
\sigma(x,iu)=-\sum_{i,j=1}^da_{jk}u_ju_k+\sum_{j=1}^db_j(x)iu
+\sum_{|\delta|\geq 1}
\frac{m_{\delta}(x)}{\delta !}(iu)^{\delta}\\
\\
=:\sum_{|\delta|\geq 1}\frac{\gamma_{\delta}(x)}{\delta !}(iu)^{\delta},
\end{array}
\end{equation}
where $\delta$ denotes $d$-tuples with nonnegative intgegers, and for $|\delta|\geq 2$
\begin{equation}
m_{\delta}(x):=\int_{{\mathbb R}^d\setminus\{0\}}z^{\delta}\nu(x,dz).
\end{equation}
Plugging in the ansatz $\hat{p}(t,x,u)=\exp(iu^Tx)\left(1+d_k(u,x)t^k\right)$ leads to the
recursive formula
\begin{equation}
\begin{array}{ll}
d_{k+1}=\frac{1}{k+1}\exp(-iux)\sum_{\delta\geq 0}\frac{\gamma_{\delta}}{\delta!}\partial^{\delta}\left(\exp(iux) d_k\right)
\end{array}
\end{equation}
Using multivariate Leibniz derivative rule we get
\begin{equation}
\begin{array}{ll}
d_{k+1}&=\frac{1}{k+1}\exp(-iux)\sum_{\delta\geq 0}\frac{\gamma (x)_{\delta}}{\delta!}\sum_{\epsilon\leq \delta}{\delta \choose \epsilon} (iu)^{\delta-\epsilon}\exp(iux)\partial^{\epsilon}\left(  d_k\right)\\
\\
&=\sum_{|\epsilon|\leq k}\partial^{\epsilon}_{\xi}\sigma (x,iu)\frac{1}{\epsilon !}\partial^{\epsilon}_{x}\left(  d_k\right) 
\end{array}
\end{equation}

\end{proof}
\begin{rem}
The recursion is very easy to compute. Let us compute the approximation for the first few terms in full expansion(up to order 3 of (\ref{univaraff})). Let $d_k$ denote the sum of terms of order $k$ in time.  We have
\begin{equation}
\begin{array}{ll}
d_0=&1\\
\\
d_1=&\sigma(x,iu)\\
\\
d_2=&\frac{1}{2}\sigma(x,iu)^2+\frac{1}{2}\sum_{l=1}^d(\partial_{\xi_l}\sigma(x,iu))\sigma_{l}(iu)\\
\\
d_3=&\frac{1}{3!}\sigma (x,iu)^3+\frac{3}{3!}\sum_{l=1}^d\sigma(x,iu)(\partial_{\xi_l} \sigma (x,iu))\sigma_l(iu)\\
\\
&+\frac{1}{3!}\sum_{k,l=1}^d(\partial_{\xi_k}\sigma (x,iu))(\partial_{\xi_l}(\sigma_1(iu))\sigma_1(iu)\\
\\
&+\frac{1}{3!}\sum_{k,l=1}^d\left( \partial^2_{\xi_k\xi_l}\sigma(x,iu)\right) \sigma_{1}(iu)^2,
\end{array}
\end{equation}
and so on.
\end{rem}
Again this theorem may be globalized leading to

\begin{thm}
Let $\Omega\subseteq {\mathbb R}^d$ be some bounded domain of arbitraty size and let $T\in (0,\infty)$ some horizon. First assume that the symbol function $\sigma$ is bounded on $\Omega\times i{\mathbb R}^d$. Then there is a a parameter value $\beta >0$ such that on $[0,T_{\tau}]\times \Omega \times {\mathbb R}^n$ the following representation holds:
\begin{equation}\label{genmultivarexp}
\begin{array}{ll}
\hat{p}_{\rho}(\tau,x,u)=\exp(iux){\Big(}1+\sum_{k\geq 1}d_k^*(\tau,x,iu)\tau^k{\Big)},
\end{array}
\end{equation}
where
\begin{equation}\label{genmultivarexp1}
\begin{array}{ll}
d_k^*(\tau,x,iu):=\\
\\
\sum_{j=1}^k
\sum_{(m,l)\in M^{\tau}_{\beta kj}}\Pi_{r=1}^k\left(\rho^{m_r}\right)^{(l_r)}d_{\sum_{r=1}^j m_r}(x,iu)\frac{\sum_{r=1}^j m_r}{\sum_{r=1}^j(m_r+l_r)!},
\end{array}
\end{equation}
\begin{equation}\label{genmultivarexp2}
M^{\tau}_{\beta kj}:=\left\lbrace (m,l):=(m_1,l_1,\cdots ,m_j,l_j)\in {\mathbb N}_0^{2j}|\sum_{r=1}^j(m_r+l_r)=k\right\rbrace ,
\end{equation}
and for $s=\sum_{r=1}^j m_r$
\begin{equation}\label{genmultivarexp3}
\begin{array}{ll}
d_s(x,iu)=&\sum_{(\alpha^{N_s},\beta_d^s)\in M^s_d}c_{(\alpha^{N_s},\beta_d^s)}\times\\
\\
&\Pi_{j=0}^{N_s-1}\Pi_{l=1}^d\left(\partial_{\xi}^{\epsilon_j} \sigma(x,iu)\right)^{\alpha^{N_s}_j}
\left(\partial_{\xi}^{\epsilon_j} \sigma_l(iu)\right)^{\beta^s_{djl}}
\end{array}
\end{equation}
with the same recursion for $c_{(\alpha^k,\beta^k)}$ as before. Furthermore $\left( \rho^m\right)^{(l)}$ denotes the $l$th derivative of the $m$th power of the function $\rho$. Explicit formulas for that are easily available by elementary calculation. 

 Note that this implies a representation of the characteristic function which is global in time by replacing $\tau$ by $\tau(t)$ in the representation above.
\end{thm}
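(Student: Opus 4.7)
The plan is to promote the local multivariate expansion of the preceding theorem to a representation valid on the whole of $[0,T_{\tau}]\times\Omega\times\mathbb{R}^n$ by exploiting the time change $t(\tau)=\beta\ln(\tan(\pi/4+\pi/4\tau))$, in the spirit of \cite{BKS}. First I would substitute the ansatz $\hat{p}_{\rho}(\tau,x,u)=\exp(iux)\bigl(1+\sum_{k\geq 1}d_k^{*}(\tau,x,iu)\tau^k\bigr)$ into the transformed Cauchy problem (\ref{FBK2}) and match coefficients of $\tau^k$. Because the generator carries the time-dependent prefactor $\rho(\tau)$, the left-hand side $\partial_{\tau}(\tau^k d_k^{*})$ produces derivatives of $\rho$-blocks, while the right-hand side $\rho(\tau)A\hat{p}_{\rho}$ produces fresh powers of $\rho$ acting on the spatially frozen local coefficients. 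Bookkeeping by the total weight $k=\sum_r(m_r+l_r)$, where $m_r$ is the exponent of the $r$th $\rho$-block and $l_r$ the number of derivatives on that block, yields exactly the index set $M^{\tau}_{\beta kj}$ in (\ref{genmultivarexp2}) and the product structure in (\ref{genmultivarexp1}).

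Next, I would reconcile the spatial part of each $d_k^{*}$ with the local recursion. Fixing $\tau_0\in[0,T_{\tau}]$ and momentarily freezing $\rho(\tau_0)$, the remaining equation $\partial_t\hat{p}=\rho(\tau_0)A\hat{p}$ has a local solution $\exp(iux)\sum_{s\geq 0}d_s(x,iu)(\rho(\tau_0) t)^s$ by the local multivariate theorem, so the rational numbers $c_{(\alpha^{N_s},\beta_d^s)}$ from (\ref{multivaraff}) appear unchanged inside each $d_s(x,iu)$; this verifies formula (\ref{genmultivarexp3}). The combinatorial factor in (\ref{genmultivarexp1}) then arises from integrating in $\tau$ and applying the Leibniz rule to the iterated products $\rho^{m_r}$ that accumulate at each recursion step.

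The main obstacle is convergence on the full domain $[0,T_{\tau}]\times\Omega\times\mathbb{R}^n$. Boundedness of $\sigma$ and its $\xi$-derivatives on $\Omega\times i\mathbb{R}^d$, which is the standing hypothesis, together with the fact that the cardinalities $|M_d^s|$ grow only polynomially while the recursion (\ref{multivaraff}) supplies compensating factorial denominators, produces uniform estimates $|d_s(x,iu)|\leq C_0^s$ for some $C_0=C_0(\Omega,\sigma)$ independent of $(x,u)$. For the $\rho$-dependent piece, a Faa di Bruno computation applied to the explicit $\rho(\tau)=(\pi\beta/4)/\sin(\pi/2+\pi\tau/2)$ yields pointwise bounds of the type $|(\rho^{m})^{(l)}(\tau)|\leq \beta^m K_1^{m+l}(m+l)!/(1-\tau)^{m+l}$ uniformly on closed subintervals of $[0,1)$. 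Substituting these into (\ref{genmultivarexp1}) and summing over $(m,l)\in M^{\tau}_{\beta kj}$ and over $j\leq k$ combines with the $\tau^k$ prefactor to produce a geometric majorant of ratio proportional to $\beta\tau/(1-\tau)$; choosing $\beta>0$ sufficiently small depending on $T_{\tau}$, $C_0$ and $K_1$ forces this ratio strictly below one throughout $[0,T_{\tau}]$, yielding absolute and uniform convergence. The delicate point is precisely this calibration: the blow-up of $\rho$ near $\tau=1$ must be absorbed by the factorial denominators carried by the local coefficients. Once convergence is established, uniqueness of solutions to (\ref{FBK2}) identifies the series with $\hat{p}_{\rho}$, and inverting $\tau=\tau(t)$ gives the claimed globally-in-time representation.
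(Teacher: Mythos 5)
You should first be aware that the paper itself states this theorem without any written proof: it appears after the words ``Again this theorem may be globalized leading to'', and the only supporting material is the proof of the local multivariate expansion (the ansatz/Leibniz computation after Theorem 2.11) together with the counting estimates of Section 3, which are carried out only for the local series. So your proposal is not competing with an existing argument; it is an attempt to supply one, and its overall architecture --- substitute the ansatz into the transformed problem (\ref{FBK2}), inherit the spatial coefficients $d_s$ from the local theorem, and control the $\tau$-series by bounding the derivatives $\left(\rho^{m}\right)^{(l)}$ and calibrating $\beta$ --- is exactly the route the paper gestures at ``in the spirit of \cite{BKS}''.

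That said, two of your steps have genuine gaps. First, the claim that the cardinalities $|M^s_d|$ ``grow only polynomially'' is false and contradicts the paper's own bookkeeping: Lemma 3.3 bounds the number of monomials of order $s$ only by $s!$, and the remark after Theorem 2.5 states that the number of coefficients in the $k$th row of the affine triangle grows exponentially. The bound $|d_s(x,iu)|\leq C_0^{\,s}$ can still be reached, but by a different mechanism: each coefficient $c_{(\alpha^{N_s},\beta^s_d)}$ carries a factor $1/s!$ from the recursion (\ref{multivaraff}), which cancels the factorial count of terms --- this is precisely the argument of Lemma 3.4, and your proof should invoke it rather than a polynomial-cardinality claim. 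Second, the standing hypothesis bounds only $\sigma$ on $\Omega\times i{\mathbb R}^d$, not its $\xi$-derivatives; your uniform estimate on $d_s$ tacitly assumes bounds on all $\partial^{\epsilon}_{\xi}\sigma$, which requires the additional input of Remark 2.1 (finiteness of the moments $\int z^{\alpha}\nu(x,dz)$) or the inequality $|\partial^{j}_{\xi}\sigma(x,iu)|\leq c^{j}|\sigma(x,iu)|$ used in Lemma 3.4; without some such assumption the claimed validity on all of $\Omega\times{\mathbb R}^{n}$ does not follow from boundedness of $\sigma$ alone. Finally, a smaller but real point: matching coefficients of $\tau^k$ must reconcile the product $\Pi_{r=1}^{k}\left(\rho^{m_r}\right)^{(l_r)}$ with an index set $M^{\tau}_{\beta kj}$ containing only $j$ pairs $(m_r,l_r)$, and the normalizing factor $\sum_{r}m_r\big/\sum_{r}(m_r+l_r)!$ in (\ref{genmultivarexp1}) should be derived from your Leibniz/Fa\`a di Bruno bookkeeping rather than asserted; as written, the combinatorial identification is the one step of your outline that is claimed but nowhere checked.
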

In the general case we have
\begin{cor}
Let $\Omega,~ S\subset {\mathbb R}$ be bounded domains of arbitraty size and let $T\in (0,\infty)$ some horizon.  Then there is a a parameter value $\beta >0$ such that on $[0,T_{\tau}]\times \Omega \times S$ the representation described in in (\ref{genunivarexp}), (\ref{genmultivarexp1}), (\ref{genmultivarexp2}), (\ref{genmultivarexp3}) holds.
\end{cor}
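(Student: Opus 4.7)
The plan is to reduce Corollary 2.13 to the bounded-symbol case already handled by Theorem 2.12, by exploiting that on a bounded product $\Omega\times iS$ the symbol and all its symbol-variable derivatives are automatically bounded. Concretely, I would first argue that for $\Omega,S\subset{\mathbb R}^d$ bounded the quantities
\begin{equation*}
M_N:=\sup_{(x,u)\in\Omega\times S}\max_{|\epsilon|\leq N}\bigl\{|\partial^{\epsilon}_{\xi}\sigma(x,iu)|,\;|\partial^{\epsilon}_{\xi}\sigma_l(iu)|\bigr\}
\end{equation*}
are finite for every $N$. The diffusion and drift contributions to $\sigma$ and $\sigma_l$ are polynomials in $u$ with affine $x$-dependence, so uniformly bounded on $\Omega\times S$. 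The jump contribution is $\int[\exp(iuz)-1-iuz1_D(z)]\nu(x,dz)$ with $\nu$ affine in $x$ and satisfying the Lévy integrability condition; differentiating under the integral produces only bounded integrands after the first two derivatives (which are controlled by the truncation and the $|z|^2$-integrability near zero), so every $\partial^{\epsilon}_{\xi}$ of these functions is uniformly bounded on the compact $\Omega\times S$.

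Next I would bound the coefficients of the inner expansion. A straightforward induction on $k$ using the recursion (\ref{multivaraff}) and the combinatorial identity $\sum_{\lambda}\prod_l\binom{a_l+\lambda_l}{\lambda_l}=\binom{\ldots}{\ldots}$ gives $|c_{(\alpha^{N_k},\beta_d^k)}|\leq 1/k!$, whence
\begin{equation*}
|d_s(x,iu)|\;\leq\;\frac{(K\,M_s)^s}{s!}\qquad\text{uniformly on }\Omega\times S,
\end{equation*}
for a combinatorial constant $K=K(d)$ absorbing the cardinality of $M^s_d$. I would then insert these bounds into the transformed coefficients $d_k^*$ given in (\ref{genmultivarexp1}) and use that $\rho(\tau)=\tfrac{\pi\beta}{4\sin(\pi/2+\pi\tau/2)}$ is real-analytic on $[0,1)$, with $(\rho^m)^{(l)}(\tau)$ admitting an explicit elementary bound of the shape $(C_1\beta)^m C_2^l\,l!\,\varphi(\tau)$ on every compact subinterval $[0,T_{\tau}]\subset[0,1)$.

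Combining these estimates and using $\sum_{r=1}^j(m_r+l_r)=k$, the contribution of each term to $|d_k^*(\tau,x,iu)|$ carries a factor $\beta^{\sum m_r}$ while the factorials $\prod(m_r+l_r)!$ in the denominator absorb the $l!$ growth of derivatives of $\rho^m$. Choosing $\beta$ sufficiently small (depending on $T$, $\mathrm{diam}(\Omega\cup S)$ and on the size of the data in (\ref{coeff})) therefore yields a geometric majorant of the form $\sum_k (C_3\beta)^k\tau^k$ for $\sum_k|d_k^*(\tau,x,iu)|\tau^k$, uniformly on $[0,T_{\tau}]\times\Omega\times S$, which establishes the claimed convergence. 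The identification of the series sum with $\hat p_{\rho}$ is then immediate, since the local multivariate representation Theorem 2.11 identifies both sides as analytic solutions of the Cauchy problem (\ref{FBK2}) near $\tau=0$, and the uniqueness of the (closed) generator's solution together with the convergence just proved extends the identity to all of $[0,T_{\tau}]$.

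The main technical obstacle I expect is a purely bookkeeping one: the number of compositions $(m,l)\in M^{\tau}_{\beta kj}$ grows exponentially in $k$, and the explicit expression for $(\rho^m)^{(l)}$, obtained by repeated application of the chain rule to $\sin^{-m}$, introduces further combinatorial factors. The whole argument hinges on showing that all these multiplicities are dominated by a single exponential in $k$ which the factor $\beta^k$ can beat, so the heart of the proof is a careful (but elementary) factorial-versus-binomial count rather than any new analytic input beyond what is already available in Theorem 2.12.
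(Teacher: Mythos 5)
Your overall strategy is the one the paper itself intends: the corollary is stated without proof, and the implicit argument is exactly your reduction --- on a bounded $\Omega\times S$ the symbol and its $\xi$-derivatives are bounded, so the bounded-symbol theorem (Theorem 2.13 in the paper's numbering) applies, and convergence is then obtained by the term-counting estimate of Section 3 combined with a choice of small $\beta$ to tame the $\rho$-factors of the time transformation. So the route is the same; but two of your intermediate claims are not correct as stated.

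First, the finiteness of your $M_N$ is \emph{not} automatic from boundedness of $\Omega\times S$ and the L\'evy integrability condition. The $\epsilon$-th $\xi$-derivative of the jump part is $\int_{{\mathbb R}^d\setminus\{0\}}(iz)^{\epsilon}e^{iuz}\,\nu(x,dz)$ (up to the truncation correction for $|\epsilon|=1$), and for $|\epsilon|\geq 3$ the integrand behaves like $|z|^{|\epsilon|}$ on $\{|z|>1\}$, where the condition $\int\min\{|z|^2,1\}\,\nu(dy)<\infty$ only gives finiteness of $\nu(\{|z|>1\})$, not of higher moments. This is precisely the content of Remark 2.2: $|\partial^{\epsilon}_{\xi}\sigma|<\infty$ for $|\epsilon|\geq 3$ iff $\int z^{\epsilon}\nu(x,dz)$ is finite. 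You must invoke the paper's standing hypothesis of smooth (hence, by that remark, all-moments-finite) symbols; your claim that "differentiating under the integral produces only bounded integrands after the first two derivatives" is false for the tail of $\nu$. Second, your bound $|d_s|\leq (KM_s)^s/s!$ with $K^s$ "absorbing the cardinality of $M^s_d$" overstates what the combinatorics give: by Lemmas 3.1--3.2 the number of monomials of order $s$ is of order $s!$ (bounded by $s!$, not by $K^s$), so the $1/s!$ from the coefficients is consumed by the count and the honest bound is $|d_s|\leq (KM_s)^s$ with no residual $1/s!$. This does not damage your conclusion --- a geometric majorant $\sum_k(C_3\beta)^k\tau^k$ still results after choosing $\beta$ small --- but the factorial you carry into the estimate of $d_k^*$ is not available, so the absorption of the $l!$ growth of $(\rho^m)^{(l)}$ must come entirely from the $\prod(m_r+l_r)!$ in the denominator of (\ref{genmultivarexp1}) and from $\beta$, not from a spare $1/s!$.
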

The latter corollary shows that univariate affine processes with smooth symbols can be approximated on finite domains with accuracy of any practical interest by the power series described in theorem 2.9.. However, in Section 4 we shall define generalized symbol functions and describe generalizations of theorem 2.9 where we describe power series in terms of generlized symbol functions and their dervatives which converge on the whole domain (i.e. $\Omega \subseteq {\mathbb R}^n$ may be unbounded or equal to ${\mathbb R}^d$ and $S=i{\mathbb R}^d$ in corollary 2.10).  
\section{Convergence by counting terms}

The structure of the explicit formulas makes it possible to prove convergence by counting the number of terms in the explicit expansion. We first have a look at the univariate case and then will generalize to the multivariate case.

Let $\Pi^n_k$ with $1\leq k\leq n,~k,n\in {\mathbb N}$ denote the number of terms of order $n$ in time, i.e. coefficients of $t^n$ in the expansion (\ref{univarexp}), and of order $k$ in the spatial variables, i.e.  the monoms of the power series (\ref{univarexp})) of order $k$, where $k=k_1+k_2+k_3$, and where $k_1$ is the exponent of drift coefficient functions or derivatives of drift coefficients involved, and $k_2$ is the exponent of diffusion coefficient functions or derivatives of diffusion coefficient functions involved, and $k_3$ is the exponent of integral terms with an affine jump measure or derivatives of integral terms with an affine jump measure involved. Note that in difference to the expansion (\ref{univarexp}) we collect {\it all} terms of order $k$ in spatial variables for some time order $n$, i.e. we make no difference between monoms where mutually different derivatives of the symbol function occurr.

Then we may determine that numbers in an integer recursion which we call the 'Affine Counting triangle'. In the following triangle the number of the row denotes the $n$, and the number of the column denotes the $k$ of $\Pi^n_k$. The first rows of the Affine Counting triangle are
\begin{equation}
\begin{array}{ll}
&1\\
& 1~~1\\
& 1~~3~~~2\\
& 1~~6~~~10~~3\\
& 1~~10~~34~~45~~4\\
&\cdots
\end{array}
\end{equation}

Elementary consideration lead us to
\begin{lem}
The number of terms of order $n$ in time and of order $n-k$ w.r.t. the spatial variables in the explicit formula is given by the recursion
\begin{equation}
\begin{array}{ll}
\Pi^n_n=1;~~n\in {\mathbb N}\\
\\
\Pi^n_{n-k}=\sum_{l=0}^k{n-1-l \choose k-l}\Pi^{n-1}_{k-1-l}~~k,n\in {\mathbb N},~1\leq k \leq n
\end{array}
\end{equation}

\end{lem}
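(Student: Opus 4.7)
The plan is to derive the recursion directly from the differential step underlying the expansion~(\ref{univarexp}). Writing $T_n := n!\,d_n$ for the integer-multiplicity-weighted form of the time-$n$ coefficient, the recursion takes the clean shape
\begin{equation*}
T_{n+1} = \sum_{\epsilon=0}^{n}\frac{1}{\epsilon!}\,\partial^\epsilon_\xi\sigma\cdot\partial^\epsilon_x T_n,
\end{equation*}
which is the ``after clearing factorials'' form of the recursive relation derived in the proof of the univariate theorem above. I read $\Pi^n_s$ as the sum of the integer weights of those monomials in $T_n$ whose spatial degree is $s$; a direct check against $d_2$ and $d_3$ as displayed in the text recovers the first rows of the affine counting triangle. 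Every factor in such a monomial is either a \emph{$\sigma$-factor} $\partial^j_\xi\sigma$ (affine in $x$, spatial degree $1$) or a \emph{$\sigma_1$-factor} $\partial^j_\xi\sigma_1$ (constant in $x$, spatial degree $0$); hence the spatial degree of a monomial equals the number of its $\sigma$-factors.

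The key combinatorial step analyses one iteration of the recursion applied to a single monomial $\mu$ in $T_n$ with exactly $m$ $\sigma$-factors. By the multivariate Leibniz rule,
\begin{equation*}
\partial^\epsilon_x \mu = \sum_{q_1+\cdots+q_n=\epsilon}\binom{\epsilon}{q_1,\ldots,q_n}\prod_{i=1}^n\partial^{q_i}_x(\text{factor}_i).
\end{equation*}
Since a $\sigma$-factor is killed by a second $x$-derivative and a $\sigma_1$-factor already by the first, the only non-vanishing contributions come from compositions in which $\epsilon$ of the $m$ $\sigma$-factors receive exactly one derivative each, turning into $\sigma_1$-factors of the same $\xi$-order. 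There are $\binom{m}{\epsilon}$ such assignments, each carrying the multinomial coefficient $\binom{\epsilon}{1,\ldots,1,0,\ldots,0}=\epsilon!$, which is cancelled exactly by the prefactor $\frac{1}{\epsilon!}$. The subsequent multiplication by $\partial^\epsilon_\xi\sigma$ restores one $\sigma$-factor, so each copy of $\mu$ generates $\binom{m}{\epsilon}$ copies of monomials in $T_{n+1}$ of spatial degree $m-\epsilon+1$.

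Summing over all source monomials grouped by their spatial degree $m$ yields
\begin{equation*}
\Pi^{n+1}_s = \sum_{m\ge s-1}\binom{m}{m-s+1}\,\Pi^n_m;
\end{equation*}
setting $s=n-k$ and reindexing via $l:=(n-1)-m$ converts this into a recursion with binomial coefficient $\binom{n-1-l}{k-l}$ and summation range $l\in\{0,\ldots,k\}$, which is the shape claimed in the lemma. The base case $\Pi^n_n=1$ is immediate, since the only spatial-degree-$n$ monomial in $T_n$ is $\sigma^n$, produced with unit weight by iterating the $\epsilon=0$ branch starting from $T_1=\sigma$. I expect the main obstacle to be the book-keeping in the Leibniz step: one must verify that the multinomial factor $\epsilon!$ cancels the prefactor $\frac{1}{\epsilon!}$ without residue, and also that every output monomial automatically lies in the admissible index set $M_{n+1}$---the inequality $\sum\beta_j\ge\sum_{j\ge 1}\alpha_j$ is preserved inductively because each fresh $\sigma$-factor of $\xi$-order $\epsilon\ge 1$ is introduced together with the $\epsilon$ new $\sigma_1$-factors created by $\partial^\epsilon_x$ at the same step.
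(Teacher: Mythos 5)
The paper offers no proof of this lemma at all (it is introduced only by ``Elementary consideration lead us to''), so there is no argument of the author's to compare yours against; your Leibniz-rule/degree-counting derivation is the natural way to attack it, and the combinatorial core --- a monomial with $m$ factors affine in $x$ contributes $\binom{m}{\epsilon}$ monomials of spatial degree $m-\epsilon+1$ under $\frac{1}{\epsilon!}\partial^{\epsilon}_{\xi}\sigma\,\partial^{\epsilon}_{x}$, whence $\Pi^{n}_{s}=\sum_{m}\binom{m}{m-s+1}\Pi^{n-1}_{m}$ --- is correct for the quantity you define. However, there is a genuine gap between what you prove and what the lemma asserts. First, after your substitution $m=n-1-l$ the recursion reads $\Pi^{n}_{n-k}=\sum_{l=0}^{k}\binom{n-1-l}{k-l}\Pi^{n-1}_{n-1-l}$, whereas the lemma has $\Pi^{n-1}_{k-1-l}$ in the last factor. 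These are not the same, and you dismiss the difference as a matter of ``shape''. You cannot: with the printed subscript the right-hand side for $n=3$, $k=1$ involves only $\Pi^{2}_{0}$ and $\Pi^{2}_{-1}$ and cannot produce the value $3$ in the paper's own triangle, so either the lemma contains a typo (which you should say explicitly and correct) or the intended meaning of the subscript differs from yours.

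Second, and more seriously, your claim that ``a direct check against $d_2$ and $d_3$ recovers the first rows of the affine counting triangle'' is true only through $n=3$. Carrying your own recursion one step further gives, for $n=4$, the degree profile $(\Pi^4_4,\Pi^4_3,\Pi^4_2,\Pi^4_1)=(1,6,11,6)$ --- and a direct expansion of $T_4=\sum_{\epsilon}\frac{1}{\epsilon!}\partial^{\epsilon}_{\xi}\sigma\,\partial^{\epsilon}_{x}T_3$ confirms this --- whereas the paper's triangle lists $1,\;6,\;10,\;3$. Moreover, under your interpretation each monomial of degree $m$ spawns $\sum_{\epsilon=0}^{m}\binom{m}{\epsilon}=2^{m}$ weighted descendants, which forces $\sum_{k}\Pi^{n}_{n-k}=n!$ \emph{exactly} for every $n$; this is consistent with the bound $\le n!$ used later for convergence, but it flatly contradicts the paper's subsequent remark that $R_4/4!=5/6$, $R_5/5!=47/60$ and $R_k/k!\downarrow 0$. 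So either the paper's triangle and recursion are erroneous from row $4$ on, or $\Pi^{n}_{k}$ is not the weight-sum you take it to be; in either case your proof does not establish the statement as printed, and the discrepancy must be confronted rather than left at a two-row spot check.
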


\begin{lem}
An elementary induction argument leads us to
\begin{equation}
\sum_{k=0}^{n-1}\Pi^n_{n-k}\leq n!
\end{equation}

\end{lem}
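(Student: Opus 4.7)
The plan is to prove the inequality by induction on $n$. Let $T_n := \sum_{k=0}^{n-1}\Pi^n_{n-k}$ denote the total of the $n$th row of the affine counting triangle. The base case is immediate, $T_1 = \Pi^1_1 = 1 = 1!$, and the inductive step aims to establish the multiplicative bound $T_n \leq n\cdot T_{n-1}$, from which $T_n \leq n!$ follows from the inductive hypothesis $T_{n-1} \leq (n-1)!$.

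The core calculation is to substitute the recursion of the preceding lemma into the expression for $T_n$, interchange the order of summation between the outer index $k$ and the inner index $l$, and collect the coefficient of each $\Pi^{n-1}_j$ on the right-hand side (with the convention that out-of-range entries of the triangle contribute zero). Each resulting coefficient reduces via the hockey-stick identity
\[
\sum_{l=0}^{k}\binom{n-1-l}{k-l} \;=\; \binom{n}{k},
\]
which follows after substituting $m = k-l$ and summing a diagonal of Pascal's triangle in the standard telescoping fashion. The sum $T_n$ is then expressed as a weighted linear combination of the row-$(n-1)$ entries, and the inductive step reduces to checking that the total weight does not exceed $n \cdot T_{n-1}$.

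The principal obstacle is that individual reduced coefficients can be as large as $\binom{n}{k}$ for middle values of $k$, which exceeds $n$, so the multiplicative bound cannot be proved termwise. The argument must exploit the distributional fact that mass within a given row is small at the extreme columns (the leftmost column stays constant at $1$ and the rightmost column grows only linearly in $n$, as the triangle data $1,1,2,3,4$ illustrates), while the large binomial weights fall precisely on these small entries and can therefore be absorbed on aggregate. The numerical slack visible in the row sums $1,2,6,20,94$ compared to $n! = 1,2,6,24,120$ confirms that such an averaging argument has ample room and should suffice for the factorial bound; a fallback, if a clean termwise control cannot be arranged, would be to strengthen the inductive hypothesis by tracking the column-wise profile of $\Pi^{n-1}_{\,\cdot\,}$ so that the binomial weights can be estimated against that sharper profile.
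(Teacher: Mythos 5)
The paper itself offers nothing beyond the phrase ``an elementary induction argument,'' so your proposal has to stand on its own, and on its own it does not close. The decisive step of your induction is the multiplicative bound $T_n\le n\,T_{n-1}$, and you correctly observe that after substituting the recursion the entries of row $n-1$ acquire binomial weights which, for middle columns, are of order $\binom{n}{\lfloor n/2\rfloor}\gg n$, so no termwise comparison can work. At exactly that point the proof stops: you assert that the large weights ``fall precisely on the small entries and can therefore be absorbed on aggregate,'' but you never prove any bound on the column profile of $\Pi^{n-1}_{\,\cdot\,}$ that would justify the absorption. The only evidence offered is the first five row sums ($1,2,6,20,94$ against $1,2,6,24,120$), and the margin there is in fact shrinking ($R_4/4!=5/6$, $R_5/5!=47/60$), so ``ample room'' is not self-evident. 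Your own fallback --- ``strengthen the inductive hypothesis by tracking the column-wise profile'' --- is precisely the missing lemma, named but not supplied. As written this is a proof strategy with the key estimate left open, not a proof.

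Two secondary points. First, the identity you invoke, $\sum_{l=0}^{k}\binom{n-1-l}{k-l}=\binom{n}{k}$, sums the binomial weights over the inner index $l$ at fixed $k$; but the factor $\Pi^{n-1}_{k-1-l}$ in the recursion also depends on $l$, so this sum is not the coefficient of any single triangle entry. To collect the coefficient of a fixed $\Pi^{n-1}_j$ you must set $l=k-1-j$ and sum over the outer index $k$, which gives $\sum_{k=j+1}^{n-1}\binom{n-k+j}{j+1}=\binom{n}{j+2}$ by the other diagonal of the hockey stick; this is repairable but changes which entries carry the large weights. Second, the recursion as printed in the paper does not reproduce the displayed triangle (for instance it yields $\Pi^3_2=0$ rather than $3$, since all the $\Pi^{n-1}_{k-1-l}$ it references fall out of range), so before any induction can be run the recurrence itself must be corrected; any honest proof of the lemma has to begin there.
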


\begin{rem}
Indeed, the latter result is a rough estimate. Defining
\begin{equation}
R_k=\sum_{k=0}^{n-1}\Pi^n_{n-k}\leq n!,
\end{equation}
from the first five rows of the Affine Counting triangle we see that
\begin{equation}
\begin{array}{ll}
\frac{R_1}{1!}=1,~\frac{R_2}{2!}=\frac{2!}{2!}=1,~\frac{R_3}{3!}=\frac{6}{3!}=1,\\
\\
~\frac{R_4}{4!}=\frac{20}{4!}=\frac{5}{6},~
\frac{R_5}{5!}=\frac{94}{5!}=\frac{47}{60},~~\cdots 
\end{array}
\end{equation}
It can be shown that $\frac{R_k}{k!}\downarrow 0$ as $k\uparrow \infty$, but I have not determined the exact ratio of convergence yet. Such an analysis may be important for numerical and computational purposes. However the rough estimate by the faculty is enough for our purposes.
\end{rem}

We have
\begin{lem}
The power series (\ref{univarexp}) converges locally to the characteristic function of the process $X^{\mbox{{\tiny aff}}}_1$.
\end{lem}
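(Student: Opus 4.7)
The plan is to identify the power series~\eqref{univarexp} with the Taylor expansion of $\hat{p}(t,x,u)$ in $t$ and then bound its coefficients locally by a geometric series, using the counting estimates derived above. Plugging the ansatz $\hat{p}(t,x,u) = e^{iux}\bigl(1 + \sum_{k \geq 1} d_k(x,iu)\, t^k\bigr)$ into \eqref{FBK1} and matching powers of $t$ gives the recursion $(k+1) d_{k+1} = e^{-iux} A(e^{iux} d_k)$. Expanding with the pseudodifferential Leibniz rule exactly as in the proof of the multivariate theorem above, and using that $\sigma$ is affine in $x$ so that $\partial_x \sigma = \sigma_1$ while higher $x$-derivatives of $\sigma$ vanish, one recovers precisely the monomial structure of \eqref{univarexp} with rational coefficients obeying \eqref{univaraff}. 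Hence it suffices to prove absolute convergence of the formal series on some neighborhood of $t=0$ for each fixed $(x,u)$ in the local region.

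Next I fix a compact set $K \subset \Omega \times {\mathbb R}^d$. By the local smoothness of $\sigma$ in $\xi$ (which, per the remark on multivariate derivatives of the symbol, reduces to local boundedness of the relevant moments of $\nu(x,\cdot)$), there is a constant $M = M(K) < \infty$ with $|\partial^j_\xi \sigma(x,iu)| \leq M$ and $|\partial^j_\xi \sigma_1(iu)| \leq M$ on $K$ for every order $j$ appearing in the expansion. Since each monomial in $d_n$ has total degree $n$ in the symbol functions and their derivatives, every such monomial is bounded in modulus by $M^n$ on $K$.

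The convergence then follows by counting. The counting lemma above bounds the number of monomials in $d_n$ by $n!$, while an induction on $k$ using the recursion~\eqref{univaraff}, the prefactor $\tfrac{1}{(k+1)!}$, and the constraint $\sum_j \beta^k_j \geq \sum_{j \geq 1} \alpha^k_j$ defining $M^k$ yields the aggregate bound
\[
\sum_{(\alpha^n,\beta^n) \in M^n} \bigl|c_{(\alpha^n,\beta^n)}\bigr| \leq 1,
\]
consistent with direct verification at $n=1,2,3$, where the row sums computed in the remark equal $1$. Combining with the previous step gives $|d_n(x,iu)| \leq M^n$ on $K$, so that $\sum_n d_n(x,iu)\, t^n$ converges absolutely for $|t| < 1/M$. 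Uniqueness of Taylor coefficients of the smooth solution $\hat{p}$ of \eqref{FBK1} then identifies the sum of the series with $\hat{p}(t,x,u)$.

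The main obstacle is the aggregate coefficient estimate $\sum |c_{(\alpha^n,\beta^n)}| \leq 1$. The naive pointwise bound $|c_{(\alpha^n,\beta^n)}| \leq 1/n!$ already fails at $n=3$, where $c_{((1,1,0),(1,0,0))} = 1/2 > 1/6$, so one cannot simply multiply the row count $n!$ by a pointwise coefficient bound; instead one must track $\sum|c|$ across rows and show that the recursion~\eqref{univaraff} preserves or contracts it. A cleaner alternative is a majorant-series argument: replacing each $\partial^j_\xi \sigma$ and $\partial^j_\xi \sigma_1$ by their uniform bound $M$ transforms the recursion into a scalar ODE for the dominating series, which is explicitly solvable and analytic in $t$ on a neighborhood of the origin, bypassing the fine combinatorics entirely.
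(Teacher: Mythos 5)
Your overall strategy is the same as the paper's: identify the series (\ref{univarexp}) with the formal Taylor expansion produced by the recursion, bound each monomial of degree $n$ by $M^n$ on a compact set, control the number and size of the coefficients, and compare with a geometric series. Where you differ is in how the coefficients are controlled, and here you have put your finger on the genuinely delicate point. The paper's own proof asserts that there are fewer than $k!$ monomials ``each one has the factor $\frac{1}{k!}$'' and that $\partial^j_\xi\sigma\leq c^j\sigma$ for some $0<c<1$; as you correctly observe, the first assertion is false for individual coefficients (already $c_{((1,1,0),(1,0,0))}=\tfrac12>\tfrac1{3!}$), and the second is an unjustified structural assumption on the symbol rather than a consequence of boundedness. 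Your replacement --- the aggregate row-sum bound $\sum_{(\alpha^n,\beta^n)\in M^n}|c_{(\alpha^n,\beta^n)}|\leq 1$ --- is the right quantity to control (it is consistent with the rows $1$, $\tfrac12+\tfrac12$, $\tfrac16+\tfrac12+\tfrac16+\tfrac16$ all summing to $1$), and if it holds it makes the counting lemma redundant, since then $|d_n|\leq M^n$ directly.

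The gap is that this aggregate bound is never actually proved: you verify it for $n\leq 3$, assert that ``an induction on $k$ using the recursion (\ref{univaraff}) \ldots yields'' it, and then concede in your final paragraph that establishing it is ``the main obstacle.'' An induction through (\ref{univaraff}) is not routine, because the recursion redistributes mass via the binomial factors $\binom{\Pi_k\alpha^{k+1}-1_j+\lambda^k}{\lambda^k}$ and the multiplicity $\binom{j+k-1}{k-1}$ of the inner sum, so one must show that the prefactor $\tfrac{1}{(k+1)!}$ (relative to the $\tfrac{1}{k!}$ normalization of the previous row) exactly compensates this growth; that bookkeeping is the entire content of the estimate and cannot be waved at. A second, smaller issue: your constant $M$ must be uniform over all derivative orders $j$ appearing in a degree-$n$ monomial; since $|\partial^j_\xi\sigma|$ is governed by the moments $\int|z|^j\,\nu(x,dz)$, one generally only gets $|\partial^j_\xi\sigma|\leq CR^j$, and you need to use the fact that the total $\xi$-derivative order inside a degree-$n$ monomial is itself $O(n)$ to absorb this into a single $M^n$. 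Your proposed majorant-series alternative (replace every symbol factor by $M$ and solve the resulting scalar recursion explicitly) is the cleanest way to close both gaps at once, and is a genuinely different and more robust route than the paper's term-counting argument; but as written the proposal stops short of carrying it out.
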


\begin{proof}
First we consider the case where the process has a bounded symbol function. Then we have less then $k!$ monoms of order $k$ each one has the factor $\frac{1}{k!}$ and consists of a finite product of the symbol function and drivatives of the symbol function. Moreover we have seen that if the multiplicand $\left( \partial_{\xi}^j\sigma(x,iu)\right)^{\alpha^k_j}$ with $0\leq j\leq k-1$ in such a symbol is accompanied by a multipicand which is a powere of $\sigma_{l}(iu)$ with same exponent. Since $\partial_{\xi}^j\sigma(x,iu)\leq c^j \sigma (x,iu)$ each momom is bounded by $c^k$ for some $0<c<1$. This means $|d_k|\leq c^k$ and for $\tau\in [0,1)$ the whole series is absolutely bounded by $\frac{1}{1-c}$.
\end{proof}

\section{Generalized symbol function}
In general it is known that the characteristic function of regular affine processes $\left(X,(P_{x})_{x\in \Omega} \right) $ with associated semigroup $(T_t)_{t\in {\mathbb R}_+}$ have a representation of the form
\begin{equation}\label{semi}
T_t\exp(iux)=\exp\left( \phi(t,u)+\psi(t,u)x)\right) 
\end{equation}
where the functions $\phi$ and $\psi$ are determined by generalized Riccati equations (cf. \cite{DFS} for details). Well these are difficult to solve (even numerically), but in some special cases it can be done and was done. In such cases we may use the explicit formula and define generalized symbol functions from it. The the procedure is as follows.
Assume that for some Cauchy problem with reduced symbol function you can solve the Cauchy problem 
For example you may think of some equations of type
\begin{equation}\label{DiffCauchy}
 \left\lbrace \begin{array}{ll}
 \frac{\partial\hat{p}}{\partial t}(t,x,u)= \left( \frac{1}{2}\sum_{ij}a_{ij}(x)\frac{\partial^2}{\partial x_i\partial x_j}+\sum_i b_i(x)\frac{\partial}{\partial x_i}\right) \hat{p}\, (t,x,u)=0,\\
\\
\hat{p}(0,x,u)=\exp(iux),\quad t\ge0,\quad x\in \Omega\subset\mathbb{R}^{d},
\end{array}\right.
\end{equation}
where $\Omega$ is some domain and $a_{ij}$ and $b_i$ are affine function as in (\ref{coeff}). Examples for this case are given below. However there are also examples of explicit solutions in form of (\ref{semi}) if there are jump terms with Levy measure (cf. \cite{DFP}). Anyway let us assume that we have an explicit solution of form (again we write $\sum_{l=1}^d\psi_{0}(t,u)_lx_l$ for simplicity of notation)
\begin{equation}\label{expform}
\exp\left( \phi_{0}(t,u)+\psi_{0}(t,u)x\right)
\end{equation}  
of the Cauchy problem
\begin{equation}\label{FBK1}
 \left\lbrace \begin{array}{ll}
 \frac{\partial\hat{p}}{\partial t}(t,x,u)= A_0\hat{p}\, (t,x,u),\\
\\
\hat{p}(0,x,u)=\exp(iux),\quad t\ge0,\quad x\in \Omega\subset\mathbb{R}^{d},
\end{array}\right.
\end{equation}
where $\Omega$ is some domain and $A_0$ is some operator 
\begin{align*}
&A_0[f](x)\equiv \frac{1}{2}\sum_{ij}a^0_{ij}(x)\frac{\partial^2}{\partial x_i\partial x_j}f(x)+\sum_i b^0_i(x)\frac{\partial}{\partial x_i}f(x) \\
&
 +\int_{\mathbb{R}^{d}\setminus\lbrace 0\rbrace}\left[  f(x+z)-f(x)-\frac{\partial
f}{\partial x}(x)\cdot z 1_{D}(z)\right]  \nu^0(x,dz).
\end{align*}
Keep in mind that most examples will be of the form (\ref{DiffCauchy}) or maybe (\ref{DiffCauchy}) plus an integral term with Levy measure as in \cite{DFS}. There is no general restriction but the idea is that a certain generalized symbol function is bounded on its domain of definition. More precisely, recall that the ordinary symbol is defined by  
\begin{align*}
&\sigma:\Omega\times i{\mathbb R}^d\rightarrow {\mathbb C},\\
\\
&\sigma(x,iu)= -\frac{1}{2}\sum_{jk}a_{jk}(x)u_ju_k+\sum_j b_j(x)i u_j\\
\\
&\hspace{2cm}+\int_{\mathbb{R}^{d}\setminus \lbrace 0\rbrace}\left[  \exp(iuz)-1-iu\cdot z 1_D(z)\right]  \nu(x,dz)
\end{align*}
for some $\Omega\subseteq {\mathbb R}^d$. Furthermore, the symbol function associated with the operator $A_0$ is
\begin{align*}
&\sigma_0:\Omega\times i{\mathbb R}^d\rightarrow {\mathbb C},\\
\\
&\sigma_0(x,iu)= -\frac{1}{2}\sum_{jk}a^0_{jk}(x)u_ju_k+\sum_j b^0_j(x)i u_j\\
\\
&\hspace{2cm}+\int_{\mathbb{R}^{d}\setminus \lbrace 0\rbrace}\left[  \exp(iuz)-1-iu\cdot z 1_D(z)\right]  \nu^0(x,dz)
\end{align*}
Now asuume that the Cauchy problem with operator $A_0$ has an explicit solution in the form (\ref{expform}).
First we define the generalized symbol function  by
\begin{align*}
&\sigma_g:\Omega\times i{\mathbb R}^d\rightarrow {\mathbb C},\\
\\
&\sigma_g(x,iu):=\sigma(x,\psi_0(t,u)).
\end{align*}
Here the partial derivative $\partial_1$ is the partial derivative with respect to the first variable, i.e. the time variable. Furthermore and similar as in the case of ordinary symbols we define
\begin{align*}
&\sigma_{gl}:i{\mathbb R}^d\rightarrow {\mathbb C},\\
\\
&\sigma_{gl}(iu):=\sigma_l(\psi_0(t,u)),
\end{align*}
where $\sigma_l$ is defined as before. Now there is another interesting effect: using the fact that
\begin{equation}
\partial_t\exp\left(\phi_0(t)+\psi_0(t,iu)x\right)=A_0\exp\left(\phi_0(t)+\psi_0(t,iu)x\right) 
\end{equation}
consider the ansatz $\widehat{p}(t,x,u)=\exp\left(\phi_0(t)+\psi_0(t,iu)x\right)(1+d_k(x,\psi_0(t,u))t^k)$ and plug it into the Cauchy problem
\begin{equation}
\partial_t\widehat{p}=A\widehat{p}
\end{equation}
Similar computations as before (using the fact that $d_k$ has only powers of the spatial variable up to order $k$) leads to the recursion
\begin{equation}
\begin{array}{ll}
d_0=1\\
\\
d_{k+1}=\frac{1}{k+1}\sum_{1\leq |\epsilon | \leq k}
\left(\sigma (x,\psi_0)- \sigma_0(x,\psi_0)\right)  d_k  \\
\\
\hspace{1.3cm}+\frac{1}{k+1}\sum_{0\leq |\epsilon| \leq k}\partial^{\epsilon}_{\xi}\sigma(x,\psi_0) \frac{1}{\epsilon!}\partial^{\epsilon}_xd_k 
\end{array}
\end{equation}
Note that if $\sigma_0=\sigma$, then $d_k=0$ for $k\geq 1$ as espected. Now the point is that we have indeed a bounded symbol, if $\psi$ is bounded. Well it makes sense to rewrite the latter recursion in the following way
\begin{equation}\label{exp1}
\begin{array}{ll}
d_0=1\\
\\
d_{k+1}=\frac{1}{k+1}\sum_{1\leq |\epsilon | \leq k}
\Delta\sigma (x,\psi_0)  d_k + \\
\\
\hspace{1.3cm}\frac{1}{k+1}\sum_{0\leq |\epsilon| \leq k}\partial^{\epsilon}_{\xi}\Delta\sigma(x,\psi_0) \frac{1}{\epsilon!}\partial^{\epsilon}_xd_k+\\
\\
\hspace{1.3cm}\frac{1}{k+1}\sum_{1\leq |\epsilon| \leq k}\partial^{\epsilon}_{\xi}\sigma_0(x,\psi_0) \frac{1}{\epsilon!}\partial^{\epsilon}_xd_k, 
\end{array}
\end{equation}
where $\Delta\sigma(x,\psi)=\sigma(x,\psi_0)-\sigma_0(x,\psi_0)$. Note that in the second sum the multiindex starts for multindices $\epsilon$ with $|\epsilon|\geq 1$. For example, we may have an unbounded symbol where the unboundedness is caused by the drift terms. Then the dffirence symbol is bounded and all derivatives of the symbol function are bounded and hence the symbol expansion holds in the whole domain of the symbol variable $\xi$. Note that this point could not be made if we just consider a brute force expansion for generalized symbol functions via the recursion
\begin{equation}\label{exp2}
\begin{array}{ll}
 d_{k+1}=\frac{1}{k+1}&{\bigg(}(-\partial_1\phi_0(t,u)-x\partial_1\psi_0(t,u))d_k\\
\\
&+\sum_{|\epsilon |\leq k}\partial^{\epsilon}_{\xi}\sigma (x,\psi)\frac{1}{\epsilon !}\partial^{\epsilon}_{\xi}d_k {\bigg )}, 
\end{array}
\end{equation}
where $\partial_1$ denotes the partial derivative with respect to the time variable $t$. Results analogous to theorem 2.11 and theorem 2.13 (or thorem 2.5. or theorem 2.9 in the univariate case) can be written down for either recursion (\ref{exp1}) or (\ref{exp2}). 
As an example one may consider the univariate case where the solution of the equation 
\[
\frac{\partial\widehat{p}}{\partial t}=(a_{0}+a_{1}x)\frac{\partial\widehat{p}}{\partial x^{2}}+(b_{0}+b_{1}x)\frac{\partial\widehat
{p}}{\partial x}
\]
with initial data $\widehat{p}(0,x,u)=\exp(iux)$ can be easily found via solving the corresponding Riccati equations. Not in all cases these solutions correspond to an affine process, of course (but we do not need this assumption, if we consider the solutions to be just solutions of Cauchy problems which have an interpretation to be characteristic functions sometimes).
Or one may consider stochastic volatility models as in \cite{He} where the explicit solution of the corresponding Riccati equations is well known, i.e. Cauchy problems determined by the stochastic differential equation 
\begin{equation}
\begin{array}{ll}
 dX_t=(b_{10}+b_{11}v)dt+\sqrt{v(t)}dW^1_t,\\
\\
dv=(b_{20}-b_{21}v)dt+\sigma\sqrt{v(t)}dW^2_t
\end{array}
\end{equation}
where $W^1_t$ and $W^2_t$ are standard Brownian motions with constant correlation $\rho$
Heston determined the characteristic function to be
\begin{equation}
\widehat{p}(t,x,v;u)=\exp\left( \phi(t,u)+\psi_{01} (t,u)v+iux\right),
\end{equation}
where
\begin{equation}
\begin{array}{ll}
\phi(t,u)=b_{00}iut+\frac{b_{20}}{\sigma^2}\left\lbrace (b_{21}-\rho\sigma iu +d)t-2\ln\left( \frac{1-g\exp(dt)}{1-g} \right) \right\rbrace \\
\\
\psi_{0}(t,u)=\frac{b_{21}-\rho\sigma iu +d}{\sigma^2}\left[\frac{1-\exp(dt)}{1-g\exp(dt)} \right],
\end{array}
\end{equation}
and
\begin{equation}
g=\frac{b_{21}-\rho\sigma iu +d}{b_{21}-\rho\sigma iu -d},~~d=\sqrt{(\rho\sigma iu-b_{21})^2-\sigma^2(2b_{11}iu-u^2)}
\end{equation}
In this case the generalized symbol function would be of the form
\begin{equation}
\sigma_g(x,iu)=\sigma(x,\psi_{0}),~\mbox{where}~\psi_{0}=(\psi_{01},iu)
\end{equation}
Note that such an expansion with generalized symbols may be useful also from a numerical/computational point of view - sometimes we may have faster convergence but this has to be checked from case to case.
Furthermore, we note that our formulas lead to some results about global solutions of Cauchy problems on the flat $n$-torus for operators with affine coefficients which are not covered by the general results of H\"ormander. Only some of that results have an probabilistic interpretation.  

\section{Further remarks}

Generalizations of our formulas for the characteristic function to more general Feller processes seems difficult (if not impossible). In can be shown that on the flat $d$-torus divergence phenomena for expansions considered in this paper are quite persistent (cf. \cite{BKS} for the univariate case). A way out may be to consider other holomorhic transforms (therefore a more general context is considered in \cite{BKS}).
A refined analysis of the affine counting trinagles may reveal that for some processe the power series provided in this paper converge also on unbounded spatial domains. 
Implementation and numerical analysis of the formulas  presented in this paper will be considered in a subsequent paper. From the computational point of view this is very interesting because we can avoid the difficult numerical solution of generalized Riccati equations. A. Eisenbl\"atter (formerly ZIB, Berlin) pointed out to me how the integer recursions considered in this paper may be programmed efficiently using rather sophisticated languages like Haskell etc.. Well this will be considered in a subsequent paper.

\end{document}